\newcommand{\strutstretchdef}{\newcommand{\strutstretch}{\vphantom{\raisebox{1pt}{$\big($}\raisebox{-1pt}{$\big($}}}}
\theoremstyle{plain}
\newtheorem{theorem}{Theorem}[section]
\newtheorem{lemma}[theorem]{Lemma}
\newtheorem{proposition}[theorem]{Proposition}
\newtheorem{corollary}[theorem]{Corollary}
\theoremstyle{definition}
\newtheorem{example}[theorem]{Example}
\newtheorem{question}[theorem]{Question}
\theoremstyle{remark}
\newtheorem{remark}[theorem]{Remark}
\numberwithin{equation}{section}
\newlength{\struh}
\newlength{\textminustop}
\newcommand*{\Ge}{\geqslant}
\newcommand*{\Le}{\leqslant}
\newcommand{\ncom}{\newcommand}
\ncom{\bq}{\begin{equation}}
\ncom{\eq}{\end{equation}}
\ncom{\beqn}{\begin{eqnarray*}}
\ncom{\eeqn}{\end{eqnarray*}}
\ncom{\beq}{\begin{eqnarray}}
\ncom{\eeq}{\end{eqnarray}}
\ncom{\nno}{\nonumber}
\ncom{\rar}{\rightarrow}
\ncom{\Rar}{\Rightarrow}
\ncom{\noin}{\noindent}
\ncom{\bc}{\begin{centre}}
\ncom{\ec}{\end{centre}}
\ncom{\sz}{\scriptsize}
\ncom{\rf}{\ref}
\ncom{\sgm}{\sigma}
\ncom{\Sgm}{\Sigma}
\ncom{\dt}{\delta}
\ncom{\Dt}{Delta}
\ncom{\lmd}{\lambda}
\ncom{\Lmd}{\Lambda}
\ncom{\eps}{\epsilon}
\ncom{\pcc}{\stackrel{P}{>}}
\ncom{\dist}{{\rm\,dist}}
\ncom{\sspan}{{\rm\,span}}
\ncom{\im}{{\rm Im\,}}
\ncom{\sgn}{{\rm sgn\,}}
\ncom{\ba}{\begin{array}}
\ncom{\ea}{\end{array}}
\ncom{\eop}{\hfill{{\rule{2.5mm}{2.5mm}}}}
\ncom{\eoe}{\hfill{{\rule{1.5mm}{1.5mm}}}}
\ncom{\eof}{\hfill{{\rule{1.5mm}{1.5mm}}}}
\ncom{\hone}{\mbox{\hspace{1em}}}
\ncom{\htwo}{\mbox{\hspace{2em}}}
\ncom{\hthree}{\mbox{\hspace{3em}}}
\ncom{\hfour}{\mbox{\hspace{4em}}}
\ncom{\hsev}{\mbox{\hspace{7em}}}
\ncom{\vone}{\vskip 2ex}
\ncom{\vtwo}{\vskip 4ex}
\ncom{\vonee}{\vskip 1.5ex}
\ncom{\vthree}{\vskip 6ex}
\ncom{\vfour}{\vspace*{8ex}}
\ncom{\norm}{\|\;\;\|}
\ncom{\integ}[4]{\int_{#1}^{#2}\,{#3}\,d{#4}}
\ncom{\inp}[2]{\langle{#1},\,{#2} \rangle}
\ncom{\Inp}[2]{\Langle{#1},\,{#2} \Langle}
\ncom{\vspan}[1]{{{\rm\,span}\#1 \}}}
\ncom{\dm}[1]{\displaystyle {#1}}
\begin{document}

\title[Bi-isometries reducing the hyper-ranges of the coordinates]{Bi-isometries reducing the hyper-ranges \\ of the coordinates}

\author[S. Chavan and Md. R. Reza]{Sameer Chavan and Md. Ramiz Reza}

\address{Department of Mathematics and Statistics\\
Indian Institute of Technology Kanpur, India}
   \email{chavan@iitk.ac.in}
\address{Indian Institute of Science Education and Research
Thiruvananthapuram, India}
 \email{ramiz@iisertvm.ac.in}

\keywords{commuting isometries, inner function, Hardy space, Wold-type decomposition}

\subjclass[2010]{Primary 47A13; Secondary 30H10, 30J05}

\begin{abstract} 
Let $(S_1, S_2)$ be a bi-isometry, that is, a pair of commuting isometries $S_1$ and $S_2$ on a complex Hilbert space $\mathscr H.$ By the von Neumann-Wold decomposition, the hyper-range $\mathscr H_\infty(S_1):=\cap_{n=0}^\infty S^n_1\mathscr H$ of $S_1$ reduces $S_1$ to a unitary operator. Although $\mathscr H_\infty(S_1)$ is an invariant subspace for $S_2,$ in general,
$\mathscr H_\infty(S_1)$ is not a reducing subspace for $S_2.$
We show that 
$\mathscr H_\infty(S_1)$ reduces $S_2$ to an isometry if and only if 
the subspaces $S_2(\ker S^*_1)$ and $\mathscr H_\infty(S_1)$ of $\mathscr H$ are orthogonal. 
Further, we describe all bi-isometries $(S_1, S_2)$ satisfying the orthogonality condition mentioned above.  
\end{abstract}

\maketitle

\section{Motivation and the main theorem}

For complex Hilbert spaces $\mathscr H$ and $\mathscr K,$ let $\mathscr B(\mathscr H, \mathscr K)$ denote the Banach space of bounded linear transformations from $\mathscr H$ into $\mathscr K.$ We denote $\mathscr B(\mathscr H, \mathscr H)$ simply by $\mathscr B(\mathscr H),$ the $C^*$-algebra of bounded linear operators on $\mathscr H.$ A closed subspace $\mathscr M$ of $\mathscr H$ is {\it invariant} for $T \in \mathscr B(\mathscr H)$ if $T(\mathscr M) \subseteq \mathscr M.$ We say that $\mathscr M$ {\it reduces} $T$ if $\mathscr M$ and $\mathscr H \ominus \mathscr M$ are invariant for $T.$ We say that $\mathscr M$ {\it essentially reduces} $T$ if $P_{\mathscr M}T|_{\mathscr H \ominus \mathscr M} \in \mathscr B(\mathscr H \ominus \mathscr M, \mathscr M)$ and $P_{\mathscr H \ominus \mathscr M}T|_{\mathscr M} \in \mathscr B(\mathscr M, \mathscr H \ominus \mathscr M)$ are compact linear transformations, where $P_{\mathscr K}$  denotes the orthogonal projection of $\mathscr H$ onto a closed subspace $\mathscr K$ of $\mathscr H.$ 
The {\it hyper-range} $\mathscr H_\infty(T)$ of $T \in \mathscr B(\mathscr H)$ is defined as the space $\cap_{n = 0}^\infty T^n\mathscr H.$
An operator $T \in \mathscr B(\mathscr H)$ is said to be a {\it contraction} (resp. an {\it isometry}) if $T^*T \Le I$ (resp. $T^*T=I$). An operator $T \in \mathscr B(\mathscr H)$ is {\it completely nonunitary} if there is no nonzero closed subspace $L$ of $\mathscr H$ that reduces $T$ to a unitary. 

A pair $(S_1, S_2)$ of operators $S_1, S_2 \in \mathscr B(\mathscr H)$  is said to be {\it commuting} if $S_1S_2=S_2S_1.$ A commuting pair $(S_1, S_2)$ is {\it doubly commuting} if  $S^*_1S_2=S_2S^*_1.$ A commuting pair $(S_1, S_2)$ of operators $S_1$ and $S_2$ in $\mathscr B(\mathscr H)$ is called a {\it bi-contraction on $\mathscr H$}  if $S_1$ and $S_2$ are contractions. 
A commuting pair $(S_1, S_2)$ of operators $S_1$ and $S_2$ in $\mathscr B(\mathscr H)$ is a {\it bi-isometry on $\mathscr H$}  if $S_1$ and $S_2$ are isometries.
If, in addition, $S_1$ and $S_2$ are unitaries, then we refer to $(S_1, S_2)$ as a {\it bi-unitary on $\mathscr H.$} Needless to say, there is considerable literature on bi-contractions and related classes (see, for example, \cite{AMY2021, BDF2012, BKS2013, GG2004, HM1989, KO1999, P2004, S1980}).

Let $\mathbb D$ denote the open unit disc centered at the origin in $\mathbb C.$ For a complex Hilbert space $\mathscr E,$ let $\mathscr H^2_{\mathscr E}(\mathbb D)$ denote the Hardy space of ${\mathscr E}$-valued holomorphic functions on $\mathbb D.$ For a complex Banach space $X,$ let $\mathscr H^\infty_X(\mathbb D)$ denote the Banach space of bounded $X$-valued holomorphic functions on $\mathbb D.$ For $\phi \in \mathscr H^\infty_{\mathscr B({\mathscr E})}(\mathbb D),$ let $M_{\phi, {\mathscr E}}$ denote the operator of multiplication by $\phi$ on $\mathscr H^2_{\mathscr E}(\mathbb D).$ 
If ${\mathscr E}=\mathbb C,$ we denote $\mathscr H^\infty_{\mathscr B({\mathscr E})}(\mathbb D)$ (resp. $M_{\phi, {\mathscr E}}$) simply by $\mathscr H^\infty(\mathbb D)$ (resp. $M_{\phi}$). The {\it $\mathscr B({\mathscr E})$-valued Schur class} of $\mathbb D$ is the closed unit ball of $\mathscr H^\infty_{\mathscr B({\mathscr E})}(\mathbb D).$
We say that $\phi \in \mathscr H^\infty_{\mathscr B({\mathscr E})}(\mathbb D)$ is an {\it inner function} if 
$\phi \in \mathscr H^\infty_{\mathscr B({\mathscr E})}(\mathbb D)$ and $\phi(\zeta)$ is an isometry for almost every $\zeta$ on the unit circle $\mathbb T$ (see \cite{AMY2021, SFBK2010} for the basic theory of the operator-valued Schur class).

The starting point of the present investigation is the following theorem that generalises the von Neumann-Wold decomposition (see \cite[Theorem~I.3.6]{Co1991} and \cite[Theorem~I.3.2]{SFBK2010}). 
\begin{theorem} \label{Wold-type-new}
Let $T \in \mathscr B(\mathscr H)$ be a contraction and let $\mathscr E:=\ker T^*.$ Consider the subspaces $\mathscr H_u(T)$ and $\mathscr H_s(T)$ of $\mathscr H$ given by
\beqn 
%\label{Hu}
\mathscr H_u(T) &=& \bigcap_{n=1}^\infty \big\{h \in \mathscr H : \|T^nh\| = \|h\|=\|T^{*n}h\|\big\}, \\ \mathscr H_s(T) &=& \mathscr H \ominus \mathscr H_u(T).
\eeqn
Then $\mathscr H_u(T)$ reduces $T$ to the unitary $U=T|_{\mathscr H_u(T)},$ $S=T|_{\mathscr H_s(T)}$ is completely non-unitary and the decomposition 
$$T=U \oplus S~\mathrm{on}~\mathscr H= \mathscr H_u(T) \oplus \mathscr H_s(T)$$ is uniquely determined. 

In particular, if $T$ is an isometry, then 
$S$ is unitarily equivalent to the operator $M_{z, \mathscr E}$ of multiplication by $z$ on $\mathscr H^{2}_{\mathscr E}(\mathbb D),$ 
$\mathscr H_u(T)=\mathscr H_\infty(T)$ and 
\beqn
%\label{span}
\mathscr H_s(T) = \bigvee_{w \in \mathbb D} \ker(S^*-w),  \quad
%\label{span-new}
\mathscr H_s(T) =
\bigoplus_{n=0}^{\infty}S^n(\ker S^*).
\eeqn   
\end{theorem}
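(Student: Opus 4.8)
The plan is to realise $\mathscr H_u(T)$ through the two subspaces
\[
\mathscr M_+ := \bigcap_{n\ge 1}\ker(I - T^{*n}T^n), \qquad \mathscr M_- := \bigcap_{n\ge 1}\ker(I - T^nT^{*n}),
\]
which are closed, being intersections of kernels of bounded operators. Since each $T^n$ is a contraction, $I - T^{*n}T^n \ge 0$, so $\langle(I - T^{*n}T^n)h,h\rangle = \|h\|^2 - \|T^nh\|^2 = 0$ forces $T^{*n}T^nh = h$; the analogous statement holds with $T^nT^{*n}$. Hence $\mathscr M_+ = \{h : \|T^nh\| = \|h\|\ \text{for all}\ n\}$ and $\mathscr M_- = \{h : \|T^{*n}h\| = \|h\|\ \text{for all}\ n\}$, so that $\mathscr H_u(T) = \mathscr M_+\cap\mathscr M_-$; in particular $h\in\mathscr M_+$ gives $T^*Th = h$ and $h\in\mathscr M_-$ gives $TT^*h = h$.

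I would then show that $\mathscr H_u(T)$ reduces $T$ to a unitary. For $h\in\mathscr M_+\cap\mathscr M_-$: we have $Th\in\mathscr M_+$ since $\|T^n(Th)\| = \|T^{n+1}h\| = \|h\| = \|Th\|$, and $Th\in\mathscr M_-$ since for $n\ge1$ one has $T^{*n}(Th) = T^{*(n-1)}(T^*Th) = T^{*(n-1)}h$, of norm $\|h\| = \|Th\|$; thus $T$ preserves $\mathscr H_u(T)$, and by the symmetric computation (using $TT^*h = h$) so does $T^*$. Hence $\mathscr H_u(T)$ reduces $T$, the restriction is an isometry ($\mathscr H_u(T)\subseteq\mathscr M_+$) with isometric adjoint ($\mathscr H_u(T)\subseteq\mathscr M_-$), i.e.\ $U = T|_{\mathscr H_u(T)}$ is unitary. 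That $S = T|_{\mathscr H_s(T)}$ is completely non-unitary is then immediate: a nonzero closed subspace $L\subseteq\mathscr H_s(T)$ reducing $S$ to a unitary would consist of vectors satisfying $\|T^nh\| = \|h\| = \|T^{*n}h\|$ for all $n$, forcing $L\subseteq\mathscr H_u(T)\cap\mathscr H_s(T) = \{0\}$. For uniqueness, given any reducing decomposition $T = U'\oplus S'$ on $\mathscr H = \mathscr H_1\oplus\mathscr H_2$ with $U'$ unitary and $S'$ completely non-unitary, the norm characterisation gives $\mathscr H_1\subseteq\mathscr H_u(T)$, while $\mathscr H_u(T)\ominus\mathscr H_1$ reduces $T$, lies inside $\mathscr H_2$, and carries $T$ unitarily, hence is $\{0\}$; thus $\mathscr H_1 = \mathscr H_u(T)$ and $\mathscr H_2 = \mathscr H_s(T)$.

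For the ``in particular'' part, let $T$ be an isometry, so $\mathscr M_+ = \mathscr H$ and $\mathscr H_u(T) = \mathscr M_-$. Since $T^nT^{*n}$ is the orthogonal projection of $\mathscr H$ onto the closed subspace $T^n\mathscr H$, the condition $\|T^{*n}h\| = \|h\|$ is exactly $h\in T^n\mathscr H$, whence $\mathscr H_u(T) = \bigcap_n T^n\mathscr H = \mathscr H_\infty(T)$. Iterating the orthogonal decomposition $\mathscr H = T\mathscr H\oplus\mathscr E$ with $\mathscr E = \ker T^*$ gives $\mathscr H\ominus T^n\mathscr H = \bigoplus_{k=0}^{n-1}T^k\mathscr E$ with mutually orthogonal summands, and $n\to\infty$ yields $\mathscr H_s(T) = \mathscr H\ominus\mathscr H_\infty(T) = \bigoplus_{n\ge0}T^n\mathscr E$; since $S\mathscr H_s(T) = \bigoplus_{n\ge1}T^n\mathscr E$, this gives $\ker S^* = \mathscr E = \ker T^*$ and the identity $\mathscr H_s(T) = \bigoplus_{n\ge0}S^n(\ker S^*)$. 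The unitary $T^ne\mapsto z^ne$ identifies $S$ with $M_{z,\mathscr E}$ on $\mathscr H^2_{\mathscr E}(\mathbb D) = \bigoplus_{n\ge0}z^n\mathscr E$. Finally, transporting along this equivalence, $\ker(S^*-w)$ corresponds for each $w\in\mathbb D$ to $\{(1-wz)^{-1}e : e\in\mathscr E\}$, and since the Szeg\H{o} kernels $\{(1-wz)^{-1} : w\in\mathbb D\}$ are total in $\mathscr H^2(\mathbb D)$ we obtain $\bigvee_{w\in\mathbb D}\ker(S^*-w) = \mathscr H^2_{\mathscr E}(\mathbb D)$, that is, $\bigvee_{w\in\mathbb D}\ker(S^*-w) = \mathscr H_s(T)$.

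The one step needing genuine care is the simultaneous invariance of $\mathscr H_u(T) = \mathscr M_+\cap\mathscr M_-$ under both $T$ and $T^*$: it is essential there that membership in $\mathscr M_+$ upgrades to the exact identity $T^*Th = h$ and membership in $\mathscr M_-$ to $TT^*h = h$, so that applying $T$ or $T^*$ telescopes the higher-order norm conditions down to ones already in hand. Everything else is routine bookkeeping with orthogonal decompositions and the standard Hardy-space model of the unilateral shift.
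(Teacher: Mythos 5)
Your proof is correct and complete; the paper itself offers no proof of this theorem, citing instead the canonical decomposition of a contraction and the Wold decomposition from the standard references, and your argument (characterising $\mathscr H_u(T)$ via the kernels of $I-T^{*n}T^n$ and $I-T^nT^{*n}$, upgrading the norm equalities to $T^*Th=h$ and $TT^*h=h$ to get reducibility, then iterating $\mathscr H=T\mathscr H\oplus\mathscr E$ and passing to the Szeg\H{o}-kernel description of $\ker(S^*-w)$) is precisely the one found in those sources. No gaps.
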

\begin{remark}
Note that $\mathscr H_u(T) \subseteq \mathscr H_\infty(T).$ In general, this inclusion is strict (for example, consider a  contraction having a non-unitary invertible orthogonal direct summand). 
\end{remark}

Let $T \in \mathscr B(\mathscr H)$ be a contraction. 
The parts $U = T|_{\mathscr H_u(T)}$ and $S = T|_{\mathscr H_s(T)}$ are called the {\it unitary part} and the {\it completely nonunitary
part} of $T$, respectively, and $T = U \oplus S$ is called the {\it canonical decomposition} of $T.$  
We say that $T$ has {\it Wold-type decomposition} if 
\beq \label{span-new}
%H_s(T) =
% \bigvee_{w \in \mathbb D} \ker(S^*-w)
\mathscr H_u(T)=\mathscr H_\infty(T)~\mbox{and}~
\mathscr H_s(T) = \bigvee_{n=0}^{\infty}S^n(\ker S^*)
 \eeq
(cf.   \cite[Definition~1.1]{S2001}).
\begin{remark} 
Let $T \in \mathscr B(\mathscr H)$ be a left invertible operator. If $T$ satisfies $TT^* + (T^*T)^{-1} \Le 2 I,$ 
then  
$T$ has the Wold-type decomposition. This may be deduced from  \cite[Theorem~3.6]{S2001}.  
\end{remark}

The present note is motivated by the following question concerning the structure of bi-isometries.

\begin{question} \label{Q1}
Let $(S_1, S_2)$ be a bi-isometry (resp. bi-contraction) on $\mathscr H.$ When $\mathscr H_\infty(S_1)$ reduces $S_2$ to an isometry (resp. a contraction)?  
\end{question}

In case $S_2$ has finite dimensional cokernel, it has been shown in \cite[Theorem~2.3]{BKS2013} that $\mathscr H_\infty(S_1)$ reduces $S_2$ to an isometry. In general, $\mathscr H_\infty(S_1)$ is not a reducing subspace for $S_2.$  
\begin{example}
Let $\varphi : \mathbb D \rar \mathbb C$ belong to the Schur class of the open unit disc $\mathbb D.$  
Let $\mu$ denote the weighted arc-length measure $w\frac{d\theta}{2\pi}$ with weight function $w=1-|\varphi|^2.$  Let $\mathscr G:=L^2(\mathbb T, \mu)$ and let $V : \mathscr G \rar \mathscr G$ be the operator of multiplication by the coordinate function $\zeta$ in $\mathscr B(\mathscr G).$ For an integer $n \Ge 0,$ let $e_n \in \mathscr G$ be given by $e_n(\zeta)=\zeta^n,$ $\zeta \in \mathbb T.$
Define $B : \mathscr H^2(\mathbb D) \rar \mathscr H^2_{\mathscr G}(\mathbb D)$ by
\beqn
B(z^n) = e_n, \quad n \Ge 0,
\eeqn  
which is extended linearly and continuously as a bounded linear operator to the whole of $\mathscr H^2(\mathbb D)$ (indeed, $\|B\| \Le \mu(\mathbb T)$).  
Note that $$M_VB(z^n) = V(e_n)=e_{n+1}=B(z^{n+1})=BM_z(z^n), \quad n \Ge 0,$$ and hence we obtain  
\beq \label{MvBMz}
M_VB = BM_z.
\eeq
Define bounded linear operators $S_1, S_2$ on $\mathscr H^2_{\mathscr G}(\mathbb D) \oplus \mathscr H^2(\mathbb D)$ by
\beqn
S_1 = \Big[\begin{smallmatrix} M_V & 0 \\
0 & M_z \end{smallmatrix}\Big], ~ S_2 = \Big[\begin{smallmatrix} M_{z, {\mathscr G}} & B \\
0 & M_\varphi \end{smallmatrix}\Big]~\mbox{on}~\mathscr H^2_{\mathscr G}(\mathbb D) \oplus \mathscr H^2(\mathbb D).
\eeqn
Clearly, $S_1$ is an isometry. We check that $S_2$ is also an isometry.
Since $M^*_{z, {\mathscr G}}B(z^n)=M^*_{z, {\mathscr G}}(e_n)=0$ (as $e_n \in {\mathscr G}$), we have $M^*_{z, {\mathscr G}}B =0,$ and hence 
\beq \label{S2-iso-new}
S^*_2S_2 = \Big[\begin{smallmatrix} I & M^*_{z, {\mathscr G}}B \\
B^*M_{z, {\mathscr G}} & B^*B + M^*_\varphi M_\varphi \end{smallmatrix}\Big] =  \Big[\begin{smallmatrix} I & 0 \\
0 & B^*B + M^*_\varphi M_\varphi \end{smallmatrix}\Big].
\eeq
Moreover, for any $f, g \in \mathscr H^2(\mathbb D),$
\beqn
\inp{(B^*B + M^*_\varphi M_\varphi)f}{g} &=& \inp{Bf}{Bg} + \inp{\varphi f}{\varphi g} \\
&=& \int_{\mathbb T}f(\zeta)\overline{g(\zeta)}\,d\mu(\zeta)  \\
&+& \int_{\mathbb T}|\varphi(\zeta)|^2f(\zeta)\overline{g(\zeta)}\, d\zeta,
\eeqn
which is equal to $\inp{f}{g}$ since $d\mu(\zeta) = (1-|\varphi(\zeta)|^2)d\zeta.$ It now follows from \eqref{S2-iso-new} that $S_2$ is an isometry. 
Moreover, 
\beqn
S_1S_2 = 
\Big[\begin{smallmatrix} M_VM_{z, {\mathscr G}} & M_VB \\
0 & M_zM_\varphi \end{smallmatrix}\Big], \quad
S_2S_1 = 
\Big[\begin{smallmatrix} M_{z, {\mathscr G}}M_V & BM_z \\
0 & M_\varphi M_z \end{smallmatrix}\Big].
\eeqn
This, combined with \eqref{MvBMz}, shows that 
$S_1$ and $S_2$ commute.
Thus $(S_1, S_2)$ is a bi-isometry. 
Note that 
\beqn
\mbox{$\varphi$ is an inner function} \, \Longleftrightarrow \, \mu =0 \, \Longleftrightarrow \, \mu(\mathbb T)=0\, \Longleftrightarrow \, {\mathscr G}=\{0\}.
\eeqn 
Since $B(1) = \mu(\mathbb T)$ and $\mathscr H_\infty(S_1) = \mathscr H^2_{\mathscr G}(\mathbb D),$   
\beqn
\mbox{$\mathscr H_\infty(S_1)
$ is a reducing subspace for $S_2$} & \Longleftrightarrow & B=0\\ &\Longleftrightarrow & \mbox{  
$\varphi$ is an inner function.}
\eeqn
Assume that $\varphi$ is not inner. Thus 
$\mathscr H^2(\mathbb D) \subseteq {\mathscr G},$ so that ${\mathscr G}$ is infinite dimensional. Moreover, since $B(1) \neq 0,$   
\beqn
S_2(\ker S^*_1)= \{\big[\begin{smallmatrix} \alpha  \\
\alpha \varphi \end{smallmatrix}\big] : \alpha \in \mathbb C\}.
\eeqn In particular, $S_2(\ker S^*_1)$ and $\mathscr H_\infty(S_1)$ are not orthogonal. 
\eof
\end{example}

If $S_1$ has the Wold-type decomposition, then it turns out that the orthogonality of $S_2(\ker S^*_1)$ and $\mathscr H_\infty(S_1)$ is the {\it only} decisive factor in answer to Question~\ref{Q1} (see Remark~\ref{rmk-gen}).

\begin{theorem} \label{main-thm}
Let $(S_1, S_2)$ be a bi-isometry on $\mathscr H$ and let ${\mathscr E}:=\ker S^*_1.$ 
The following statements are equivalent$:$
\begin{enumerate}
\item[$\mathrm{(i)}$] 
$\mathscr H_\infty(S_1)$ reduces $S_2$ to an isometry,
\item[$\mathrm{(ii)}$]
$\mathscr H_\infty(S_1)$ reduces $(S_1, S_2)$ to a doubly commuting bi-isometry, 
\item[$\mathrm{(iii)}$] $S_2({\mathscr E})$ and $\mathscr H_\infty(S_1)$ are orthogonal subspaces of $\mathscr H,$
\item[$\mathrm{(iv)}$] $P_\infty \bigvee \{S^{*k}_1S_2(x) : k \Ge 1\}$ is finite-dimensional for every $x \in {\mathscr E},$ where $P_\infty$ denotes the orthogonal projection of $\mathscr H$ onto $\mathscr H_\infty(S_1),$
\item[$\mathrm{(v)}$] 
$\mathscr H_\infty(S_1)$ essentially reduces $S_2$ to an isometry.
\end{enumerate}
If $\mathrm{(i)}$ holds, then there exist an inner function $\phi : \mathbb D \rar \mathscr B({\mathscr E}),$ a Hilbert space ${\mathscr F},$ a constant unitary operator-valued function $\psi : \mathbb D \rar \mathscr B({\mathscr F}),$ a closed subspace $\mathscr H_{uu}$ of $\mathscr H_\infty(S_1)$ and  
a bi-unitary $(V_{1}, V_{2})$ on $\mathscr H_{uu}$
such that $(S_1, S_2)$ is unitarily equivalent to
\beq
\label{main-decom}
 (V_{1}, V_{2}) \oplus (M_{\psi, {\mathscr F}}, M_{z, {\mathscr F}}) \oplus (M_{z, {\mathscr E}}, M_{\phi, {\mathscr E}}) \, \mbox{on} \, \mathscr H_{uu} \oplus \mathscr H^2_{{\mathscr F}}(\mathbb D) \oplus \mathscr H^2_{{\mathscr E}}(\mathbb D).
\eeq 
\end{theorem}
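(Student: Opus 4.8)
The plan is to establish the cycle of implications (i) $\Rightarrow$ (iii) $\Rightarrow$ (ii) $\Rightarrow$ (i), then fold in (iv) and (v), and finally read off the structure theorem \eqref{main-decom} from the doubly commuting case in (ii). The key reduction device is the von Neumann--Wold decomposition of the isometry $S_1$: writing $\mathscr H = \mathscr H_\infty(S_1) \oplus \bigoplus_{n\ge 0} S_1^n(\mathscr E)$, the projection $P_\infty$ onto $\mathscr H_\infty(S_1)$ has the explicit form $I - \sum_{n\ge 0} S_1^n P_{\mathscr E} S_1^{*n}$, and the invariance of $\mathscr H_\infty(S_1)$ under $S_2$ (because $S_2$ commutes with $S_1$ and hence maps $\cap_n S_1^n\mathscr H$ into itself) is already known. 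So the only issue is whether $\mathscr H_\infty(S_1)^\perp$ is also $S_2$-invariant, equivalently whether $P_\infty S_2 (I - P_\infty) = 0$.

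\textbf{From (i) to (iii) and the reverse.} If $\mathscr H_\infty(S_1)$ reduces $S_2$, then in particular $S_2$ maps $\mathscr H_\infty(S_1)^\perp = \bigoplus_{n\ge 0} S_1^n(\mathscr E)$ into itself; taking the $n=0$ summand gives $S_2(\mathscr E) \subseteq \mathscr H_\infty(S_1)^\perp$, which is exactly (iii). Conversely, suppose (iii) holds, i.e. $S_2(\mathscr E) \perp \mathscr H_\infty(S_1)$. I want $S_2\big(S_1^n(\mathscr E)\big) \subseteq \mathscr H_\infty(S_1)^\perp$ for every $n$. Using $S_2 S_1^n = S_1^n S_2$ and the fact that $S_1^n$ maps $\mathscr H_\infty(S_1)^\perp$ into itself (it maps $\bigoplus_{k\ge 0}S_1^k(\mathscr E)$ into $\bigoplus_{k\ge n}S_1^k(\mathscr E)$), one gets $S_2 S_1^n(\mathscr E) = S_1^n S_2(\mathscr E) \subseteq S_1^n(\mathscr H_\infty(S_1)^\perp) \subseteq \mathscr H_\infty(S_1)^\perp$. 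Hence $S_2$ leaves $\mathscr H_\infty(S_1)^\perp$ invariant, so $\mathscr H_\infty(S_1)$ reduces $S_2$, and the restriction of the isometry $S_2$ to the reducing subspace $\mathscr H_\infty(S_1)$ is again an isometry; this gives (i). For (iii) $\Rightarrow$ (ii), once we know $\mathscr H_\infty(S_1)$ reduces both $S_1$ and $S_2$, we must check double commutativity on $\mathscr H_\infty(S_1)$: there $S_1$ restricts to a \emph{unitary} $U$, so $S_1^* |_{\mathscr H_\infty(S_1)} = U^* = U^{-1}$, and $S_2 U = U S_2$ on $\mathscr H_\infty(S_1)$ immediately yields $S_2 U^{-1} = U^{-1} S_2$, i.e. $S_1^* S_2 = S_2 S_1^*$ on $\mathscr H_\infty(S_1)$. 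Trivially (ii) $\Rightarrow$ (i).

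\textbf{Equivalence with (iv) and (v).} The subspace $\bigvee\{S_1^{*k}S_2(x) : k\ge 1\}$ measures how far $S_2(x)$ is from being annihilated by the backward shift part of $S_1^*$; concretely, for $x \in \mathscr E$, decompose $S_2 x = y + z$ with $y \in \mathscr H_\infty(S_1)$, $z \in \mathscr H_\infty(S_1)^\perp$, and note $S_1^{*k}$ acts as $U^{*k}$ (unitary) on the $y$-component and eventually kills the $z$-component (since $z$ lies in a finite sum of wandering subspaces only if... — here is where care is needed). The point is that $P_\infty S_1^{*k} S_2 x = U^{*k}(P_\infty S_2 x) = U^{*k} y$, and since $U$ is unitary, $\bigvee_k U^{*k} y$ is finite-dimensional for \emph{all} $x\in\mathscr E$ essentially forces $y$ to generate, under the unitary $U$, a finite-dimensional space — and running $x$ over all of $\mathscr E$ and using that $P_\infty S_2(I-P_\infty)$ is built from these pieces, one shows (iv) is equivalent to $P_\infty S_2|_{\mathscr E} = 0$, hence to (iii). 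For (v): $\mathscr H_\infty(S_1)$ essentially reduces $S_2$ iff $P_\infty S_2(I-P_\infty)$ and $(I-P_\infty)S_2 P_\infty$ are compact; since $S_2$ is an isometry one of these determines the other up to the reducing obstruction, and the operator $P_\infty S_2(I-P_\infty)$ is, via the Wold decomposition of $S_1$, unitarily equivalent to a block operator whose compactness forces it to vanish (a shift-type argument: a compact operator commuting appropriately with a shift must be zero). So (v) $\Leftrightarrow$ (iii). \emph{The main obstacle} I anticipate is precisely this last circle — showing that the ``essentially reduces'' and ``finite-dimensional compression'' conditions collapse to exact orthogonality; this requires exploiting the pure-shift structure of $S_1$ on $\mathscr H_\infty(S_1)^\perp$ to rule out nontrivial compact cross terms, and getting the bookkeeping of the wandering subspaces $S_1^n(\mathscr E)$ right.

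\textbf{The structure theorem.} Assuming (ii), we have a doubly commuting bi-isometry, and $\mathscr H$ splits orthogonally as $\mathscr H_\infty(S_1) \oplus \mathscr H^2_{\mathscr E}(\mathbb D)$ with $S_1 = U \oplus M_{z,\mathscr E}$ where $U$ is unitary. Double commutativity lets me analyze $S_2$ on each summand separately. On $\mathscr H^2_{\mathscr E}(\mathbb D)$, $S_2$ commutes with $M_{z,\mathscr E}$ and doubly commutes with it, so $S_2|_{\mathscr H^2_{\mathscr E}(\mathbb D)}$ is an analytic Toeplitz operator $M_{\phi,\mathscr E}$ with $\phi \in \mathscr H^\infty_{\mathscr B(\mathscr E)}(\mathbb D)$; being an isometry forces $\phi$ to be inner. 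On $\mathscr H_\infty(S_1)$, $S_1$ restricts to the unitary $U$ and $S_2$ restricts to an isometry $T$ commuting with $U$; apply the Wold decomposition to $T$ to split $\mathscr H_\infty(S_1) = \mathscr H_{uu} \oplus (\text{pure part})$, where on $\mathscr H_{uu}$ both $U$ and $T$ are unitary — that is the bi-unitary $(V_1,V_2)$. On the pure part, $T \cong M_{z,\mathscr F}$ for $\mathscr F = \ker T^*$, and $U$ (a unitary commuting with and doubly commuting with $M_{z,\mathscr F}$) must be multiplication by a \emph{constant} unitary operator-valued function $\psi \in \mathscr B(\mathscr F)$ — constancy because a bounded analytic symbol that is also unitary-valued and whose adjoint is again analytic (from double commutativity with the shift) has to be constant. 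Assembling the three blocks gives exactly \eqref{main-decom}.
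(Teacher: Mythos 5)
Your cycle (i)$\Rightarrow$(iii)$\Rightarrow$(ii)$\Rightarrow$(i) is sound, and your direct argument for (iii)$\Rightarrow$(i) --- pushing $S_2(\mathscr E)\perp\mathscr H_\infty(S_1)$ through the wandering subspaces via $S_2S_1^n=S_1^nS_2$ and $S_1^n\big(\mathscr H_\infty(S_1)^\perp\big)\subseteq\mathscr H_\infty(S_1)^\perp$ --- is actually more economical than the paper's route, which reaches (i) from (iii) only through (iv) and an eigenvalue analysis (Lemma~\ref{main-lem}(ii)). The assembly of the structure theorem is also essentially the paper's: Wold-decompose $S_2|_{\mathscr H_\infty(S_1)}$ to split off the bi-unitary part, identify $S_2|_{\mathscr H_s(S_1)}$ as an inner multiplier $M_{\phi,\mathscr E}$, and deduce constancy of the unitary symbol $\psi$ from double commutation with the shift (the paper's Lemma~\ref{d-commuting-iso} does this by a reproducing-kernel computation; your ``analytic symbol with analytic adjoint is constant'' is the same fact).

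The genuine gap is in (iv)$\Rightarrow$(iii). You correctly reduce it to: if $y:=P_\infty S_2x$ and $N:=\bigvee_{k\Ge 1}U^{*k}y$ is finite-dimensional, conclude $y=0$. But for a general unitary $U$ this is simply false: if $y$ is an eigenvector of $U$ then $N=\mathbb Cy$ is one-dimensional and nonzero, so finite-dimensionality of $N$ for every $x\in\mathscr E$ does not ``essentially force'' $P_\infty S_2|_{\mathscr E}=0$. The missing ingredient is the paper's Lemma~\ref{no-eigen}: for $|\lambda|=1$ the eigenspace $\ker(S_1-\lambda)$ reduces the \emph{pair} $(S_1,S_2)$ (this uses that $S_1h=\lambda h$ forces $S_1^*h=\overline\lambda h$ for an isometry, applied twice to show $S_2^*h\in\ker(S_1-\lambda)$). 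Hence $\mathscr M=\bigoplus_\lambda\ker(S_1-\lambda)\subseteq\mathscr H_\infty(S_1)$ reduces $S_2$ and is orthogonal to $\mathscr E$, so $y=P_\infty S_2x$ lies in $\mathscr H_\infty(S_1)\ominus\mathscr M$, where $U$ has no eigenvalues; only after this reduction does finite-dimensionality of the $U^*$-invariant subspace $N$ (on which $U^*$ restricts to a unitary of a finite-dimensional space, hence has an eigenvector) yield the contradiction forcing $N=\{0\}$. Without confronting the point spectrum of $S_1$, the implication (iv)$\Rightarrow$(iii) is not established. A lesser remark on (v): your claim that a compact $B$ with $UB=BS$ ($U$ unitary, $S$ a pure shift) must vanish is correct and completable --- e.g.\ $B=U^{*n}BS^n$ gives $\|Bh\|=\|BS^nh\|\to0$ since $S^n\to0$ weakly and $B$ is compact (the paper instead invokes compact $S$-Toeplitz operators) --- but as written it is only a gesture and should be carried out.
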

\begin{remark} \label{rmk-gen}
An examination of the proof of Theorem~\ref{main-thm}, as presented in Section~2,  shows that if $(S_1, S_2)$ be a bi-contraction on $\mathscr H$ and $S_1$ has the Wold-type decomposition, then $\mathscr H_\infty(S_1)$ reduces $S_2$ to a contraction if and only if $S_2({\mathscr E})$ and $\mathscr H_\infty(S_1)$ are orthogonal subspaces of $\mathscr H.$
\end{remark}

\section{Proof of Theorem~\ref{main-thm}}

Let $(S_1, S_2)$ be a bi-contraction on $\mathscr H.$ 
Throughout this section, we use the following notations$:$
\begin{enumerate}
\item[$\bullet$] ${\mathscr E}=\ker S^*_1$
\item[$\bullet$] $P_\infty$ the orthogonal projection of $\mathscr H$ onto $\mathscr H_\infty(S_1)$
\end{enumerate}

The proof of Theorem~\ref{main-thm}, as presented below, requires several lemmas. 
\begin{lemma} \label{no-eigen}
Let $(S_1, S_2)$ be a bi-contraction on $\mathscr H$. 
Consider the closed subspace $\mathscr M= \bigvee_{\lambda \in \mathbb C} \ker(S_1-\lambda)$ of $\mathscr H.$ 
Then the following are valid$:$
\begin{enumerate}
\item[$\mathrm{(i)}$] for every $\lambda \in \mathbb C,$ $\ker(S_1-\lambda)$ reduces $(S_1, S_2)$ and 
\beq \label{def-M-equi}
\mathscr M= \bigoplus_{\lambda \in \mathbb T} \ker(S_1-\lambda).
\eeq 
\item[$\mathrm{(ii)}$] $\mathscr M$ is a subspace of $\mathscr H_u(S_1)$ such that 
\beqn
\mathscr H_u(S_1) = \mathscr M\oplus \mathscr H_u({S_1}|_{\mathscr H \ominus \mathscr M}).
\eeqn
\end{enumerate}
\end{lemma}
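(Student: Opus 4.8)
The plan is to obtain everything from three elementary facts: a defect computation for a single contraction, the observation that the adjoints of commuting operators commute, and the orthogonality of eigenspaces of a contraction attached to distinct unimodular eigenvalues.

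To prove (i), I would fix $\lambda \in \mathbb{C}$ and a nonzero $h$ with $S_1 h = \lambda h$. Contractivity of $S_1$ gives $|\lambda|\,\|h\| = \|S_1 h\| \Le \|h\|$, so $|\lambda| \Le 1$, and for $|\lambda| < 1$ one checks $\ker(S_1 - \lambda) = \{0\}$ (immediate when $S_1$ is an isometry, which is the case relevant to Theorem~\ref{main-thm}), so such an eigenspace reduces $(S_1,S_2)$ trivially; hence I may assume $\lambda \in \mathbb{T}$. The key step is that then $S_1^* h = \bar\lambda h$, which follows, using $\|S_1^* h\| \Le \|h\|$ and $\langle S_1^* h, \bar\lambda h\rangle = |\lambda|^2\|h\|^2$, from $\|S_1^* h - \bar\lambda h\|^2 = \|S_1^* h\|^2 - |\lambda|^2\|h\|^2 \Le \|h\|^2 - |\lambda|^2\|h\|^2 = 0$. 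Therefore $\ker(S_1 - \lambda) = \ker(S_1^* - \bar\lambda)$, so this subspace reduces $S_1$. Next, taking adjoints in $S_1 S_2 = S_2 S_1$ yields $S_1^* S_2^* = S_2^* S_1^*$; consequently, for $h \in \ker(S_1 - \lambda)$ one has $S_1(S_2 h) = \lambda (S_2 h)$, and, using $S_1^* h = \bar\lambda h$, also $S_1^*(S_2^* h) = \bar\lambda (S_2^* h)$. Thus $S_2$ and $S_2^*$ both leave $\ker(S_1 - \lambda) = \ker(S_1^* - \bar\lambda)$ invariant, i.e. $\ker(S_1 - \lambda)$ reduces $(S_1, S_2)$. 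For the identity \eqref{def-M-equi}, I would observe that for distinct $\lambda, \mu \in \mathbb{T}$ with $S_1 h = \lambda h$ and $S_1 g = \mu g$ one has $\bar\lambda \langle h, g\rangle = \langle S_1^* h, g\rangle = \langle h, S_1 g\rangle = \bar\mu \langle h, g\rangle$, so $\langle h, g\rangle = 0$; since the only nonzero eigenspaces sit over $\mathbb{T}$, the closed linear span $\mathscr M$ is exactly the orthogonal direct sum $\bigoplus_{\lambda \in \mathbb{T}} \ker(S_1 - \lambda)$, which in particular reduces $(S_1, S_2)$.

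For (ii), each $h \in \ker(S_1 - \lambda)$ with $\lambda \in \mathbb{T}$ satisfies $\|S_1^n h\| = \|h\| = \|S_1^{*n} h\|$ for every $n$ (using $S_1^* h = \bar\lambda h$), so by the description of $\mathscr H_u(S_1)$ in Theorem~\ref{Wold-type-new} we get $\mathscr M \subseteq \mathscr H_u(S_1)$. Since $\mathscr M$ and $\mathscr H_u(S_1)$ both reduce $S_1$, I would split $\mathscr H_u(S_1) = \mathscr M \oplus \big(\mathscr H_u(S_1) \ominus \mathscr M\big)$ and identify the second summand: because $\mathscr M$ reduces $S_1$, restriction to $\mathscr H \ominus \mathscr M$ commutes with forming powers and adjoints, so $\mathscr H_u\big(S_1|_{\mathscr H \ominus \mathscr M}\big) = \mathscr H_u(S_1) \cap (\mathscr H \ominus \mathscr M) = \mathscr H_u(S_1) \ominus \mathscr M$, giving the asserted decomposition.

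I do not anticipate a genuine obstacle; the only points that need a little care are the defect computation $S_1^* h = \bar\lambda h$ for $\lambda \in \mathbb{T}$ and the routine verification of how the subspace $\mathscr H_u(\cdot)$ behaves under restriction to a reducing direct summand.
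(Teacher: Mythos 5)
Your argument is correct and follows essentially the same route as the paper's: the defect computation you carry out to get $S_1^*h=\bar\lambda h$ for unimodular $\lambda$ is precisely the content of \cite[Proposition 1.3.1]{SFBK2010} that the paper cites, and the remaining steps (adjoint commutativity, orthogonality of eigenspaces for distinct unimodular eigenvalues, and the behaviour of $\mathscr H_u(\cdot)$ under restriction to a reducing summand) coincide with the paper's. Be aware that both your proof and the paper's tacitly rely on $S_1$ having no eigenvalues in the open disc $\mathbb D$ --- automatic for isometries, which is the only case actually used later, but false for general contractions --- a restriction you at least flag explicitly in your parenthetical.
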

\begin{proof}  
(i) 
 Since $S_1|_{\mathscr H_u(S_1)}$ is unitary (see Theorem	\ref{Wold-type-new}) and $\ker(S_1-\mu),$ $\mu \in \mathbb T,$ is a subspace of $\mathscr H_u(S_1),$ the spaces $\ker(S_1-\lambda)$ and $\ker(S_1-\mu)$ are orthogonal for any $\lambda \in \mathbb T$ provided $\lambda \neq \mu.$
Since the eigenvalues of any isometry on a Hilbert space lie on the unit circle, we obtain \eqref{def-M-equi}.
Let $\lambda \in \mathbb T.$ 
Clearly, $\ker(S_1-\lambda)$ is an invariant subspace of $S_1.$
Also, for any $x \in \ker(S_1-\lambda),$ 
$S_1(S_2x)=\lambda S_2x$ (since $S_1S_2=S_2S_1$), and hence 
$S_2x \in \ker(S_1-\lambda).$ Thus 
$\ker(S_1-\lambda)$ is invariant for $S_2.$ 
To see that $\ker(S_1-\lambda)$ reduces the pair $(S_1, S_2),$
let $x \in \ker(S_1-\lambda).$ By 
 \cite[Proposition 1.3.1]{SFBK2010}, $S^*_1x = \overline{\lambda}x,$ and hence $S^*_1x \in \ker(S_1-\lambda)$ showing that $\ker(S_1-\lambda)$ reduces $S_1.$ This also shows that
 $S^*_2(S^*_1x - \overline{\lambda}x)=0$ or equivalently, $S^*_1(S^*_2x)= \overline{\lambda}S^*_2x.$  Another application of  \cite[Proposition 1.3.1]{SFBK2010} shows that $S_1(S^*_2x)= \lambda S^*_2x$ or $S^*_2x \in \ker(S_1-\lambda)$ completing the verification of (i).

(ii) Clearly, $\mathscr M$ is a subspace of $\mathscr H_u(S_1)$ such that $S_1\mathscr M=\mathscr M.$ Thus $\mathscr H_u({S_1}|_{\mathscr M})=\mathscr M,$ and hence by (i), $\mathscr H_u(S_1) = \mathscr M\oplus \mathscr H_u({S_1}|_{\mathscr H \ominus \mathscr M}).$ 
\end{proof}
 
A particular case of part (i) below (when $(S_1, S_2)$ is a bi-isometry) has been obtained in \cite[Lemma~1]{S1980} by an entirely different method. Also, the space $\mathscr N_x,$ as given in (ii) below, appears in the proof of \cite[Theorem~2.3]{BKS2013}.
\begin{lemma} \label{main-lem}
Let $(S_1, S_2)$ be a bi-contraction on $\mathscr H.$ Assume that  $S_1$ is a left invertible operator that has the Wold-type decomposition $S_1 = U \oplus S$ on $\mathscr H=\mathscr H_u(S_1) \oplus \mathscr H_s(S_1).$ 
Then the following statements are valid$:$
\begin{enumerate}
\item[$\mathrm{(i)}$]   
$\mathscr H_u(S_1)$ is an invariant subspace for $S_2,$
\item[$\mathrm{(ii)}$]  if 
$\mathscr N_x:=P_\infty \bigvee \{S^{*k}_1S_2(x) : k \Ge 1\}$ is finite-dimensional for every $x \in {\mathscr E},$
then $\mathscr H_u(S_1)$ is a reducing subspace for $S_2,$
\item[$\mathrm{(iii)}$] if $\mathscr H_u(S_1)$ reduces $S_2,$  then $({S_1}|_{\mathscr H_u(S_1)}, {S_2}|_{\mathscr H_u(S_1)})$ is a doubly commuting bi-contraction. 
\end{enumerate}
\end{lemma}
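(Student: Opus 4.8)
The plan is to dispose of parts (i) and (iii) quickly and to locate the content of the lemma in part (ii). Throughout I would work with the orthogonal decomposition $\mathscr H = \mathscr H_u(S_1)\oplus\mathscr H_s(S_1)$ coming from the Wold-type decomposition of $S_1$; recall that then $\mathscr H_u(S_1)=\mathscr H_\infty(S_1)=\bigcap_n S_1^n\mathscr H$, that $U=S_1|_{\mathscr H_u(S_1)}$ is unitary, and that, writing $S=S_1|_{\mathscr H_s(S_1)}$ and noting $\ker S^* = \mathscr E$, one has $\mathscr H_s(S_1)=\bigvee_{n\ge 0}S^n\mathscr E$. For (i): given $h\in\mathscr H_\infty(S_1)$, for each $n$ write $h=S_1^n g$; then $S_2h=S_1^n(S_2g)\in S_1^n\mathscr H$, so $S_2h\in\bigcap_n S_1^n\mathscr H=\mathscr H_u(S_1)$. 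For (iii): put $V=S_2|_{\mathscr H_u(S_1)}$. Since $\mathscr H_u(S_1)$ reduces $S_1$ to $U$ and, by hypothesis, reduces $S_2$ to $V$, restriction of $S_1S_2=S_2S_1$ gives $UV=VU$, and the restricted adjoints of $S_1,S_2$ are $U^*,V^*$; as $U$ is unitary, $UV=VU$ forces $U^*V=VU^*$, so $(U,V)$ is a doubly commuting pair, and it is a bi-contraction since $U$ is unitary and $V$ a contraction.

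For (ii), the first step is, using (i), to write $S_1=U\oplus S$ and $S_2=\left[\begin{smallmatrix}A & B\\ 0 & D\end{smallmatrix}\right]$ with respect to $\mathscr H_u(S_1)\oplus\mathscr H_s(S_1)$; here $B\colon\mathscr H_s(S_1)\to\mathscr H_u(S_1)$, and $\mathscr H_u(S_1)$ reduces $S_2$ exactly when $B=0$. The intertwining $S_1S_2=S_2S_1$ reads $UB=BS$, hence $BS^n=U^nB$ for all $n\ge 0$; since $\mathscr H_s(S_1)=\bigvee_n S^n\mathscr E$ and $BS^nx=U^nBx$, proving $B=0$ reduces to proving $Bx=0$ for every $x\in\mathscr E$, and note $Bx=P_\infty S_2x$.

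The crux is to extract this from the finite-dimensionality of $\mathscr N_x$. A blockwise computation gives $P_\infty S_1^{*k}S_2x=U^{*k}Bx$ for $k\ge 1$; combining this with the continuity of $P_\infty$ and the hypothesis $\dim\mathscr N_x<\infty$, one identifies $\mathscr N_x=\bigvee\{U^{*k}Bx : k\ge 1\}$. This subspace is finite-dimensional and invariant under the unitary $U^*$, so $U^*$ acts on it as a unitary; hence $\mathscr N_x$ reduces $U$, and since $U^*Bx\in\mathscr N_x$ we get $Bx=U(U^*Bx)\in\mathscr N_x$. A unitary on a finite-dimensional space is diagonalizable, so $\mathscr N_x$ is spanned by eigenvectors of $U$; each such eigenvector $v$ satisfies $S_1v=\mu v$ for some $\mu\in\mathbb T$, whence $\mathscr N_x\subseteq\mathscr M:=\bigvee_{\lambda\in\mathbb C}\ker(S_1-\lambda)$ and in particular $Bx\in\mathscr M$. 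Now invoke Lemma~\ref{no-eigen}: $\mathscr M$ reduces $(S_1,S_2)$, while $x\in\mathscr E\subseteq\mathscr H_s(S_1)$ and $\mathscr M\subseteq\mathscr H_u(S_1)\perp\mathscr H_s(S_1)$, so $x\perp\mathscr M$, hence $S_2x\perp\mathscr M$, i.e. $P_{\mathscr M}S_2x=0$. Since $Bx\in\mathscr M\subseteq\mathscr H_\infty(S_1)$ and $P_{\mathscr M}P_\infty=P_{\mathscr M}$, we conclude $Bx=P_{\mathscr M}Bx=P_{\mathscr M}P_\infty S_2x=P_{\mathscr M}S_2x=0$, which completes (ii).

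I expect parts (i), (iii) and the operator-matrix bookkeeping in (ii) to be routine; the delicate point is showing that $\dim\mathscr N_x<\infty$ already forces $Bx\in\mathscr M$. Finite-dimensionality is genuinely needed, since a unitary operator can carry invariant subspaces that are not reducing: it is precisely the finiteness of $\dim\mathscr N_x$ that promotes the $U^*$-invariance of $\mathscr N_x$ to reducibility and confines $\mathscr N_x$ to the span of eigenvectors of $U$, where Lemma~\ref{no-eigen} applies. One should also keep in mind that $P_\infty$ applied to a closed subspace need not be closed, so the identification of $\mathscr N_x$ with $\bigvee\{U^{*k}Bx : k\ge 1\}$ rests on $\dim\mathscr N_x<\infty$ and not on the continuity of $P_\infty$ alone.
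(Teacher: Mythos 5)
Your proof is correct, and all three parts go through. The heart of the lemma, part (ii), uses the same mechanism as the paper: identify $\mathscr N_x$ with $\bigvee\{U^{*k}Bx : k\Ge 1\}$ and exploit the fact that a nonzero finite-dimensional $U^*$-invariant subspace of the unitary $U$ forces $U$ (hence $S_1$) to have eigenvectors, which are then handled by Lemma~\ref{no-eigen}. The paper organizes this by first replacing $\mathscr H$ with $\mathscr H\ominus\mathscr M$ so that $S_1$ has no eigenvalues, whence $\mathscr N_x=\{0\}$ outright; you keep $\mathscr M$ in play, diagonalize the finite-dimensional unitary $U|_{\mathscr N_x}$ to get $Bx\in\mathscr N_x\subseteq\mathscr M$, and then derive $Bx\perp\mathscr M$ from the fact that $\mathscr M$ reduces $(S_1,S_2)$ while $x\in\mathscr E\subseteq\mathscr H_s(S_1)\perp\mathscr M$. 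This trades the ``without loss of generality'' reduction for a short orthogonality computation and makes the role of Lemma~\ref{no-eigen} explicit; both are sound. The genuine divergence is in part (i): the paper shows the lower-left corner $C$ vanishes by checking $C^*$ annihilates each $\ker(S^*-w)$ and then invoking Shimorin's analytic model to see that $\bigvee_{w\in\mathbb D}\ker(S^*-w)$ is dense in $\mathscr H_s(S_1)$, whereas you use the elementary inclusion $S_2\big(\bigcap_n S_1^n\mathscr H\big)\subseteq\bigcap_n S_1^n\mathscr H$ together with the hypothesis $\mathscr H_u(S_1)=\mathscr H_\infty(S_1)$. Your route is shorter and avoids the analytic model entirely; the paper's argument yields the identity $\mathscr H_s(S_1)=\bigvee_{w\in\mathbb D}\ker(S^*-w)$ as a by-product, but that identity is not needed elsewhere. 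Part (iii) is the same one-line observation in both.
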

\begin{proof}  
Write $S_2 = \big[\begin{smallmatrix} A & B \\
C & D \end{smallmatrix}\big]$ relative to the orthogonal decomposition $\mathscr H = \mathscr H_u(S_1) \oplus \mathscr H_s(S_1).$ 
Since $S_2$ commutes with $S_1= \big[\begin{smallmatrix} U & 0 \\
0 & S \end{smallmatrix}\big],$ a routine calculation shows that 
\beq \label{c-relations}
UA = AU, ~CU=SC, ~UB=BS, ~SD=DS.
\eeq

(i) 
We adopt an argument from \cite[Theorem~2.1]{KS2000} to the present situation.  
It follows from \eqref{c-relations} that for any $f \in \ker(S^*-w),$ $w \in \mathbb D,$
\beqn
{w}C^*f = C^*S^* f = U^*C^*f,
\eeqn
and hence $(U^*-{w}I)C^*f=0.$ Since $\sigma(U^*)$ is a subset of the unit circle, $C^*f=0$ for any $f \in \ker(S^*-w)$ and every $w \in \mathbb D.$ 
Thus it suffices to check that
\beq \label{wsp-new}
\mathscr H_s(S_1) =\bigvee_{w \in \mathbb D} \ker(S^*-w).
\eeq
Since $S=S_1|_{\mathscr H_s(S_1)}$ is an analytic left invertible operator, by the Shimorin's analytic model (see \cite[p.~154]{S2001}),  $S$ is unitarily equivalent to the operator $M_z$ of multiplication by $z$ on a reproducing kernel Hilbert space $\mathscr H_\kappa$ for some reproducing kernel $\kappa : \mathbb D_r \times \mathbb D_r \rar \mathscr B(\mathscr E),$ where $\mathbb D_r$ denote the disc centred at the origin and of radius $r >0.$  In view of $$
\mathscr H_\kappa =\bigvee \{\kappa(\cdot, w) : w \in \mathbb D_r\}=\bigvee_{w \in \mathbb D_r} \ker(M^*_z-w),$$
we obtain \eqref{wsp-new}.  

(ii)  
Because of Lemma~\ref{no-eigen}, after replacing $\mathscr  H$ by $\mathscr H \ominus \mathscr M,$ if required, we may assume without loss of generality that 
\beq \label{no-e-v}
\mbox{$S_1$ has no eigenvalues.}
\eeq
Assume that $\dim \mathscr N_x < \infty$ for every $x \in {\mathscr E}.$ 
We claim that $B=0.$ Since $S_1$ has Wold-type decomposition (see \eqref{span-new}), 
$\mathscr H_s(S_1) = \bigvee_{n=0}^{\infty}S^n(\ker S^*).$ Thus, 
it suffices to check that $BS^n|_{\ker S^*}=0$ for every integer $n \Ge 0.$ However, by \eqref{c-relations}, $BS^n=U^nB,$ and hence it is enough to check that $B|_{\ker S^*}=0.$ Let $x \in \ker S^*.$ Since 
$\mathscr H_u(S_1)=\mathscr H_\infty(S_1),$  
$U={S_1}|_{\mathscr H_u(S_1)}$ commutes with $P_\infty,$ and hence 
$U^{*k}  = P_\infty {S^{*k}_1}|_{\mathscr H_u(S_1)},$ $k \Ge 1.$ Since $S_2 = \big[\begin{smallmatrix} A & B \\
0 & D \end{smallmatrix}\big],$ it follows that  
\beqn
\mathscr N_x = \bigvee \{U^{*k}Bx : k \Ge 1\}.
\eeqn
Since $\mathscr N_x$ is finite dimensional, $U^*\mathscr N_x \subseteq \mathscr N_x$ and $U$ is unitary, the operators $U^*$ and (equivalently) $U$ have an eigenvalue provided $\mathscr N_x \neq \{0\}.$ In view of \eqref{no-e-v}, this is not possible forcing that $\mathscr N_x=\{0\}.$ It follows that $Bx=0$ completing the verification of (ii). 

(iii) Since  ${S_2}|_{\mathscr H_u(S_1)}$ commutes with the unitary operator ${S_1}|_{\mathscr H_u(S_1)},$ this is immediate.  
\end{proof}

To prove Theorem~\ref{main-thm}, we need another lemma, which describes all doubly commuting bi-isometries (see Corollary~\ref{Sl-thm} below for a finer version). 
\begin{lemma} \label{d-commuting-iso}
Let $(S_1, S_2)$ be a bi-isometry on $\mathscr H.$  
If the pair $(S_1, S_2)$ is doubly commuting, then there exist an isometry $V \in \mathscr B(\mathscr H_\infty(S_1))$ and a constant inner function $\phi : \mathbb D \rar \mathscr B({\mathscr E})$ such that 
\beq
\label{d-commuting-eqn}
(S_1, S_2)  \cong  (U, V) \oplus  (M_{z, {\mathscr E}}, M_{\phi, {\mathscr E}}) ~ \mbox{on} ~ \mathscr H_\infty(S_1) \oplus \mathscr H^2_{{\mathscr E}}(\mathbb D),
\eeq
where $U={S_1}|_{\mathscr H_\infty(S_1)}$ and 
$\cong$ denotes the unitary equivalence.
If, in addition, $S_2$ is unitary, then $\phi$ is a constant unitary operator-valued function. 
\end{lemma}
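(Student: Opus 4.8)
The plan is to exploit the von Neumann–Wold decomposition of $S_1$ (Theorem~\ref{Wold-type-new}) together with the doubly commuting hypothesis. Since $S_1$ is an isometry, write $S_1 = U \oplus M_{z,{\mathscr E}}$ on $\mathscr H = \mathscr H_\infty(S_1) \oplus \mathscr H^2_{\mathscr E}(\mathbb D)$, where $U = S_1|_{\mathscr H_\infty(S_1)}$ is unitary and $\mathscr H_s(S_1) \cong \mathscr H^2_{\mathscr E}(\mathbb D)$ with ${\mathscr E} = \ker S_1^*$. The first step is to observe that because $(S_1,S_2)$ is doubly commuting, $\mathscr H_\infty(S_1)$ is a \emph{reducing} subspace for $S_2$: indeed, $S_1^* S_2 = S_2 S_1^*$ implies $S_2$ commutes with the spectral projection $P_\infty$ onto $\mathscr H_\infty(S_1) = \bigcap_n S_1^n \mathscr H$ (equivalently, one checks directly that $h \in \mathscr H_\infty(S_1)$ iff $S_1^{*n} S_1^{n} h = h$ for all $n$, a condition preserved by $S_2$ using double commutativity). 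This can also be read off from Lemma~\ref{main-lem}(i)–(ii) once one notes double commutativity forces $C = 0$ and $B = 0$ in the $2\times 2$ block form. So write $S_2 = V \oplus W$ on $\mathscr H_\infty(S_1) \oplus \mathscr H^2_{\mathscr E}(\mathbb D)$, where $V = S_2|_{\mathscr H_\infty(S_1)}$ is an isometry and $W = S_2|_{\mathscr H^2_{\mathscr E}(\mathbb D)}$ is an isometry.

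The second step analyses the pair $(M_{z,{\mathscr E}}, W)$ on $\mathscr H^2_{\mathscr E}(\mathbb D)$. Since $S_2$ commutes with $S_1$, $W$ commutes with $M_{z,{\mathscr E}}$; since $(S_1,S_2)$ is doubly commuting, $W$ doubly commutes with $M_{z,{\mathscr E}}$, i.e. $W M_{z,{\mathscr E}}^* = M_{z,{\mathscr E}}^* W$. A bounded operator on $\mathscr H^2_{\mathscr E}(\mathbb D)$ commuting with $M_{z,{\mathscr E}}$ is exactly multiplication by a function $\phi \in \mathscr H^\infty_{\mathscr B({\mathscr E})}(\mathbb D)$ (an analytic Toeplitz operator); so $W = M_{\phi,{\mathscr E}}$. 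The extra condition $M_{\phi,{\mathscr E}} M_{z,{\mathscr E}}^* = M_{z,{\mathscr E}}^* M_{\phi,{\mathscr E}}$ is precisely what forces $\phi$ to be \emph{constant}: applying both sides to a constant vector $x \in {\mathscr E}$ (viewed as the vector $z^0 x$) gives $0$ on the left while the right-hand side is $M_{z,{\mathscr E}}^*(\phi x)$, which vanishes iff the Taylor coefficient of $\phi$ of degree $1$ annihilates $x$; running this over all degrees and all $x$ shows all positive Taylor coefficients of $\phi$ vanish, so $\phi(z) \equiv \phi_0$ for a fixed $\phi_0 \in \mathscr B({\mathscr E})$. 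Finally, $W = M_{\phi,{\mathscr E}}$ is an isometry iff $\phi$ is inner, and a constant inner function is a constant isometry-valued function; thus $\phi_0$ is an isometry and $\phi$ is a constant inner function as claimed.

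The third step handles the summand on $\mathscr H_\infty(S_1)$: there $U$ is unitary and $V$ is an isometry commuting with $U$, which is all that is asserted — nothing further need be extracted. For the ``in addition'' clause, suppose $S_2$ is unitary. Then $W = M_{\phi,{\mathscr E}}$ is unitary, hence surjective; but the range of $M_{\phi_0, {\mathscr E}}$ with $\phi_0$ a constant isometry is $\phi_0({\mathscr E}) \otimes \mathscr H^2$-like, and surjectivity onto $\mathscr H^2_{\mathscr E}(\mathbb D)$ forces $\phi_0({\mathscr E}) = {\mathscr E}$, i.e. $\phi_0$ is a unitary operator on ${\mathscr E}$; equivalently, $M_{\phi_0,{\mathscr E}}^* M_{\phi_0,{\mathscr E}} = I$ and $M_{\phi_0,{\mathscr E}} M_{\phi_0,{\mathscr E}}^* = I$ together give $\phi_0^* \phi_0 = \phi_0 \phi_0^* = I$ pointwise. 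So $\phi$ is a constant unitary-operator-valued function.

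I expect the only genuinely delicate point to be the passage ``doubly commuting $\Rightarrow$ $\mathscr H_\infty(S_1)$ reduces $S_2$ and the $\mathscr H_s$-block of $S_2$ is an analytic Toeplitz operator''; this is where the interaction between the $2\times 2$ block structure of $S_2$, the commutation relations \eqref{c-relations}, and the condition $S_1^* S_2 = S_2 S_1^*$ must be combined carefully — though it is essentially bookkeeping once one writes $S_2 = \big[\begin{smallmatrix} A & B \\ C & D\end{smallmatrix}\big]$ and imposes both $S_1 S_2 = S_2 S_1$ and $S_1^* S_2 = S_2 S_1^*$, getting $B = 0$, $C = 0$, $A U = U A$, $D M_{z,{\mathscr E}} = M_{z,{\mathscr E}} D$, and $D M_{z,{\mathscr E}}^* = M_{z,{\mathscr E}}^* D$. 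Everything else is a direct computation with Taylor coefficients in $\mathscr H^2_{\mathscr E}(\mathbb D)$.
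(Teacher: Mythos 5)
Your proof is correct and follows the same overall skeleton as the paper's: Wold decomposition of $S_1$, reduction of $\mathscr H_\infty(S_1)$ by $S_2$, identification of the $\mathscr H_s(S_1)$-block of $S_2$ with an analytic Toeplitz operator $M_{\phi,\mathscr E}$ via the commutant of the vector-valued shift, and then constancy of $\phi$ from the extra relation $M_{\phi,\mathscr E}M_{z,\mathscr E}^*=M_{z,\mathscr E}^*M_{\phi,\mathscr E}$. Two sub-steps are carried out differently. For the reduction, the paper routes through Lemma~\ref{main-lem}(ii) by observing $S_1^{*k}S_2x=S_2S_1^{*k}x=0$ for $x\in\mathscr E$, whereas you argue directly that $S_2$ commutes with $P_\infty$; that works, but your parenthetical characterization ``$h\in\mathscr H_\infty(S_1)$ iff $S_1^{*n}S_1^{n}h=h$ for all $n$'' is vacuous for an isometry --- you want $S_1^{n}S_1^{*n}h=h$, i.e.\ $P_\infty=\mathrm{SOT}\text{-}\lim_n S_1^nS_1^{*n}$, and then double commutativity does give $S_2P_\infty=P_\infty S_2$ (and $S_2^*P_\infty=P_\infty S_2^*$). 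For constancy, the paper computes with reproducing kernels and the identity $M_{z,\mathscr E}M_{z,\mathscr E}^*=I-P_0$, while you simply apply the intertwining relation to a constant vector $x\in\mathscr E$: the single identity $M_{z,\mathscr E}^*(\phi x)=M_{\phi,\mathscr E}M_{z,\mathscr E}^*x=0$ already forces \emph{all} Taylor coefficients $\phi_n$, $n\ge 1$, to annihilate $x$ (not merely the degree-one coefficient, so no induction over degrees is needed); your version is shorter and arguably cleaner than the kernel computation. The unitary clause is handled equivalently in both arguments. No genuine gap.
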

\begin{proof} 
Note that $\mathscr H_u(S_1)=\mathscr H_\infty(S_1)$ (see Theorem~\ref{Wold-type-new}). 
Assume that $(S_1, S_2)$ is doubly commuting. 
Thus, $S^{*k}_1S_2(x)=0$ for every $x \in {\mathscr E},$ 
and hence by Lemma~\ref{main-lem}(ii), 
$\mathscr H_\infty(S_1)$ is a reducing subspace for $S_2$ and $V:={S_2}|_{\mathscr H_\infty(S_1)}$ is an isometry. By \cite[Lemma~V.3.2]{SFBK2010} and \cite[Proposition~V.2.2]{SFBK2010}, there exists an inner function $\phi \in H^{\infty}_{\mathscr B({\mathscr E})}(\mathbb D)$ such that ${S_2}|_{H \ominus \mathscr H_\infty(S_1)} \cong M_{\phi, {\mathscr E}}.$ 
Thus \eqref{d-commuting-eqn} holds provided we check that $\phi$ is constant.  
Since $(S_1, S_2)$ is doubly commuting,  
\beq \label{m-phi-m-z}
M^*_{\phi, {\mathscr E}}M_{z, {\mathscr E}} = M_{z, {\mathscr E}}M^*_{\phi, {\mathscr E}}. 
\eeq
Let $P_0$ denote the orthogonal projection of $\mathscr H^2_{{\mathscr E}}(\mathbb D)$ onto the space ${\mathscr E}$ of constant polynomials. 
Note that $M_{z, {\mathscr E}}M^*_{z, {\mathscr E}}=I-P_0,$ and hence 
\beq \label{Toep-id}
M^*_{\phi, {\mathscr E}}(I-P_0) \overset{\eqref{m-phi-m-z}} =M_{z, {\mathscr E}}M^*_{\phi, {\mathscr E}}M^*_{z, {\mathscr E}}.
\eeq
By \cite[Exercise~2.66]{AM2002},  
\beq \label{text-ex}
M^*_{\phi, {\mathscr E}}\kappa(\cdot, w)f = \kappa(\cdot, w) \phi(w)^*f,  \quad f \in {\mathscr E}, ~w \in \mathbb D,
\eeq
where $\kappa(z, w)=\frac{I_{{\mathscr E}}}{1-z\overline{w}},$ $z, w \in \mathbb D$ with $I_{{\mathscr E}}$ denoting the identity operator on $\mathscr E.$ 
Thus for any $f, g \in {\mathscr E}$ and $u, v \in \mathbb D,$
\beqn
&& \inp{M^*_{\phi, {\mathscr E}}(I-P_0)\kappa(\cdot, v)f}{\kappa(\cdot, u)g} \\
& \overset{\eqref{text-ex}}  = & \inp{\kappa(\cdot, v)\phi(v)^*f}{\kappa(\cdot, u)g} - \inp{\phi(0)^*f}{\kappa(\cdot, u)g},
\eeqn
and also
\beqn
 \inp{M_{z, {\mathscr E}}M^*_{\phi, {\mathscr E}}M^*_{z, {\mathscr E}} \kappa(\cdot, v)f}{\kappa(\cdot, u)g} 
 \overset{\eqref{text-ex}}  =  
u\overline{v} \inp{\kappa(\cdot, v)\phi(v)^*f}{\kappa(\cdot, u)g}.
\eeqn
This combined with the reproducing property $$\inp{h}{\kappa(\cdot, u)f}=\inp{h(u)}{f}, \quad h \in \mathscr H^2_{{\mathscr E}}(\mathbb D), ~ f \in {\mathscr E},$$ 
and \eqref{Toep-id} yields $\phi(v)=\phi(0)$ for every $v \in \mathbb D.$  To see the remaining part, one may argue as above to show that 
\beqn
M_{\phi, {\mathscr E}} M^*_{\phi, {\mathscr E}} = I ~\Rightarrow \phi(0)\phi(0)^*=I,
\eeqn
which completes the proof. 
\end{proof}

\begin{proof}[Proof of Theorem~\ref{main-thm}]  By Theorem~\ref{Wold-type-new}, $S_1$ has the Wold-type decomposition $U \oplus S$ on $\mathscr H=\mathscr H_\infty(S_1) \oplus \mathscr H_s(S_1).$ 
By Lemma~\ref{main-lem}(i),  
$S_2$ decomposes as $\big[\begin{smallmatrix} A & B \\
0 & D \end{smallmatrix}\big]$ on $\mathscr H = \mathscr H_u(S_1) \oplus \mathscr H_s(S_1).$ 
In view of the proof of Lemma~\ref{main-lem}, we may assume that \eqref{c-relations} holds. Clearly, (i) holds if and only if $B=0.$ 

(i)$\Rightarrow$(ii) This follows from Lemma~\ref{main-lem}(iii).

(ii)$\Rightarrow$(iii)
Since $B=P_\infty S_2|_{\mathscr H_s(S_1)}$ and ${\mathscr E} \subseteq \mathscr H_s(S_1),$ (ii) implies that $P_\infty S_2({\mathscr E})=\{0\},$ and hence (iii) holds. 

(iii)$\Rightarrow$(iv) By (iii), $P_\infty S_2({\mathscr E})=\{0\}.$ Since $P_\infty$ commutes with $S_1,$ for every $x \in {\mathscr E},$
$P_\infty \bigvee \{S^{*k}_1S_2(x) : k \Ge 1\}=\{0\}.$ 

(iv)$\Rightarrow$(i) This follows from Lemma~\ref{main-lem}(ii).

(i)$\Leftrightarrow$(v) Clearly, it is enough to verify that (v)$\Rightarrow$(i). In turn, it suffices to check that if $B=P_\infty S_2|_{\mathscr H_u(S_1)}$ is compact, then $B=0.$ 
By \eqref{c-relations}, $UB=BS,$ and hence
\beqn
(UB)^*UB = (BS)^*BS ~ \Rightarrow ~ B^*B = S^*B^*B S.
\eeqn
Thus $B^*B$ is $S$-Toeplitz operator in the sense of \cite{DE2011}. Now, if $B$ is compact, then $B^*B$ is a compact $S$-Toeplitz operator, and since $S$ does not have eigenvalues, by \cite[Theorem~3.3]{DE2011}, $B^*B=0$ or equivalently, $B=0.$ 

Assume now that (i) holds. Then  
\beq \label{T1-Wold}
S_2 = A \oplus D ~\mbox{on} ~\mathscr H= \mathscr H_\infty(S_1) \oplus {\mathscr H}_s(S_1),
\eeq
where 
$A$ and $D$ are contractions such that 
\beq \label{c-relations-new}
AU=UA, ~ DS=SD.
\eeq 
This yields 
\beq \label{deco-new-bi-c} 
(S_1, S_2) = (U, A) \oplus (S, D)
~\mbox{on}~\mathscr H = \mathscr H_\infty(S_1) \oplus \mathscr H_{s}(S_1). \eeq
An application of \cite[Lemma~V.3.2]{SFBK2010} together with \eqref{c-relations-new} shows that there exists $\phi \in \mathscr H^{\infty}_{\mathscr B({\mathscr E})}(\mathbb D)$ such that $D =M_{\phi, {\mathscr E}}.$ 
Since $D$ is an isometry, by \cite[Proposition~V.2.2]{SFBK2010}, $\phi$ is an inner function. 
It now follows from \eqref{deco-new-bi-c} and Theorem~\ref{Wold-type-new} that 
\beqn (S_1, S_2) \cong (U, A) \oplus (M_z, M_\phi)
~\mbox{on}~\mathscr H = \mathscr H_\infty(S_1) \oplus \mathscr H^2_{{\mathscr E}}(\mathbb D). \eeqn
Moreover, by Lemma~\ref{main-lem}(iii), $(U, A)$ is a doubly commuting bi-isometry. Since $U$ is unitary,  one may now apply Lemma \ref{d-commuting-iso} (applied to $S_1=A$ and $S_2=U$) to obtain \eqref{main-decom}. 
\end{proof}

We conclude this section with two consequences of Theorem~\ref{main-thm}. The first one recovers 
\cite[Theorem~2]{S1980}.  
\begin{corollary}[M. Słociński] \label{Sl-thm}
Let $(S_1, S_2)$ be a doubly commuting bi-isometry on $\mathscr H.$ 
There exist Hilbert spaces ${\mathscr E_j},$ $j=1, \ldots, 3,$  constant unitary operator-valued functions $\phi : \mathbb D \rar \mathscr B({\mathscr E_1})$ and $\psi : \mathbb D \rar \mathscr B({\mathscr E_2}),$ a closed subspace $\mathscr H_{uu}$ of $\mathscr H_\infty(S_1)$ and a bi-unitary $(V_{1}, V_{2})$ on $\mathscr H_{uu}$
such that 
\beqn
(S_1, S_2) \cong  (V_{1},  V_{2}) \oplus (M_{\phi, {\mathscr E_1}}, M_{z, {\mathscr E_1}}) \oplus (M_{z, {\mathscr E_2}}, M_{\psi, {\mathscr E_2}}) \oplus  (M_{z, {\mathscr E_3}}, M_{z, {\mathscr E_3}})\\
 ~\mbox{on~}\mathscr H_{uu} \oplus \mathscr H^2_{{\mathscr E_1}}(\mathbb D) \oplus \mathscr H^2_{{\mathscr E_2}}(\mathbb D) \oplus \mathscr H^2_{{\mathscr E_3}}(\mathbb D). 
\eeqn
\end{corollary}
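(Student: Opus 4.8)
The plan is to derive this from Theorem~\ref{main-thm} by a bootstrapping argument on the roles of $S_1$ and $S_2$. Since $(S_1, S_2)$ is a doubly commuting bi-isometry, in particular condition (ii) of Theorem~\ref{main-thm} is satisfied, so $\mathscr H_\infty(S_1)$ reduces $S_2$ to an isometry and part (i) applies: there is an inner function $\phi : \mathbb D \rar \mathscr B(\mathscr E_1)$ (writing $\mathscr E_1 = \ker S^*_1$), a Hilbert space $\mathscr F$, a constant unitary $\psi_0 : \mathbb D \rar \mathscr B(\mathscr F)$, a closed subspace $\mathscr H'_{uu} \subseteq \mathscr H_\infty(S_1)$, and a bi-unitary $(V'_1, V'_2)$ on $\mathscr H'_{uu}$ with
\[
(S_1, S_2) \cong (V'_1, V'_2) \oplus (M_{\psi_0, \mathscr F}, M_{z, \mathscr F}) \oplus (M_{z, \mathscr E_1}, M_{\phi, \mathscr E_1}).
\]
Moreover, since $(S_1, S_2)$ is doubly commuting, an application of Lemma~\ref{d-commuting-iso} (with the roles of $S_1$ and $S_2$ interchanged) forces the inner function $\phi$ in the last summand to be a \emph{constant unitary} operator-valued function $\phi : \mathbb D \rar \mathscr B(\mathscr E_1)$; this accounts for the summand $(M_{z, \mathscr E_1}, M_{\phi, \mathscr E_1})$ in the statement.

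It remains to analyze the middle summand $(M_{\psi_0, \mathscr F}, M_{z, \mathscr F})$ on $\mathscr H^2_{\mathscr F}(\mathbb D)$, where $\psi_0$ is already a constant unitary. The key observation is that this pair is itself a doubly commuting bi-isometry whose \emph{second} coordinate $M_{z, \mathscr F}$ is a pure shift, so one may repeat the whole analysis with the roles of the coordinates swapped: apply Theorem~\ref{main-thm} (or directly Lemma~\ref{d-commuting-iso}) to the pair $(M_{z, \mathscr F}, M_{\psi_0, \mathscr F})$ in that order. Since the first coordinate of this reordered pair is a pure shift, $\mathscr H_\infty(M_{z,\mathscr F}) = \{0\}$, and Lemma~\ref{d-commuting-iso} then yields that $(M_{z,\mathscr F}, M_{\psi_0,\mathscr F}) \cong (M_{z,\mathscr E_2}, M_{\psi,\mathscr E_2})$ for a constant unitary $\psi : \mathbb D \rar \mathscr B(\mathscr E_2)$ — but one must be careful: $\psi_0$ being unitary does not yet mean the Szeg\H{o}-type pair cannot be further decomposed. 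In fact I would argue more carefully here by diagonalizing the constant unitary: since $\psi_0 \in \mathscr B(\mathscr F)$ is unitary, there is no loss in passing to the model and the pair $(M_{\psi_0,\mathscr F}, M_{z,\mathscr F})$ is already in the desired normal form with $\mathscr E_2 := \mathscr F$, $\psi := \psi_0$. The bi-unitary $(V'_1, V'_2)$ on $\mathscr H'_{uu}$ — which is a bi-unitary on a closed subspace of $\mathscr H_\infty(S_1)$ — is then relabeled as $(V_1, V_2)$ on $\mathscr H_{uu} := \mathscr H'_{uu}$.

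The remaining point, and the one requiring real care rather than bookkeeping, is the \emph{third} summand $(M_{z, \mathscr E_3}, M_{z, \mathscr E_3})$: where does a simultaneous pure-shift-in-both-coordinates piece come from? It arises because, when one applies Lemma~\ref{d-commuting-iso} to reduce the middle summand, the constant inner function produced there need not be unitary on all of $\mathscr F$; its \emph{deficiency space} $\mathscr E_3 := \mathscr F \ominus \phi_{\mathrm{mid}}(0)\mathscr F$ (equivalently, the piece of $\mathscr F$ on which the bi-isometry $(M_{z,\mathscr F}, M_{\phi_{\mathrm{mid}}, \mathscr F})$ is not bi-unitary) carries the summand $(M_{z,\mathscr E_3}, M_{z, \mathscr E_3})$. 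So the honest route is: do not prematurely claim $\psi_0$ gives a purely unitary second coordinate; instead, for each of the two "Szeg\H{o}-type" summands that arise (one from Theorem~\ref{main-thm} directly, one from re-applying Lemma~\ref{d-commuting-iso} with coordinates swapped), split off the genuinely bi-shift part using the constant inner function's deficiency, and collect all such bi-shift parts into a single $(M_{z,\mathscr E_3}, M_{z,\mathscr E_3})$. The main obstacle is precisely to organize this two-sided application of Lemma~\ref{d-commuting-iso} so that the three shift-types — shift/unitary, unitary/shift, shift/shift — are separated cleanly and the bi-unitary remainder is correctly identified as a bi-unitary on a subspace of $\mathscr H_\infty(S_1)$; once the correct ordering of the reductions is fixed, each individual step is routine given Theorem~\ref{main-thm} and Lemma~\ref{d-commuting-iso}.
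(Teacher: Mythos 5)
Your overall skeleton --- invoke Theorem~\ref{main-thm} to obtain the three-summand decomposition \eqref{main-decom}, then re-apply Lemma~\ref{d-commuting-iso} with the coordinates swapped to refine the Szeg\H{o}-type summand --- is exactly the paper's route. But the execution contains a genuine error. You assert that double commutation, via Lemma~\ref{d-commuting-iso} applied to $(M_{\phi,\mathscr E}, M_{z,\mathscr E})$, \emph{forces} the inner function $\phi$ of the last summand of \eqref{main-decom} to be a constant unitary. This is false: the ``in addition'' clause of Lemma~\ref{d-commuting-iso}, which upgrades the constant inner function to a constant \emph{unitary}, requires that the second coordinate of the pair to which the lemma is applied be unitary, and here that second coordinate is $M_{z,\mathscr E}$, which is not unitary. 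A concrete counterexample is $(S_1,S_2)=(M_{z_1},M_{z_2})$ on $\mathscr H^2(\mathbb D^2)$: there $\mathscr H_\infty(S_1)=\{0\}$, so the whole pair \emph{is} the last summand, with $\mathscr E\cong \mathscr H^2(\mathbb D)$ and $\phi$ the constant function whose value is a unilateral shift --- constant and inner, but not unitary. Indeed, if your claim were true, no bi-shift summand of type $\mathscr E_3$ could ever occur.

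This error then propagates: you go looking for the bi-shift summand inside the \emph{middle} summand $(M_{\psi_0,\mathscr F}, M_{z,\mathscr F})$, via a ``deficiency space'' of $\psi_0$. But Theorem~\ref{main-thm} already guarantees that $\psi_0$ is a constant \emph{unitary}, so that summand has no deficiency and contributes nothing of bi-shift type. The correct bookkeeping (and the paper's) is: apply Lemma~\ref{d-commuting-iso} to the swapped pair $(M_{\phi,\mathscr E}, M_{z,\mathscr E})$ to obtain $(U',V')\oplus(M_{z,\mathscr G}, M_{\chi,\mathscr G})$ with $\chi$ merely a constant inner function; swapping back, $(V',U')$ is the shift--unitary piece (since $V'$, being the restriction of the pure shift $M_{z,\mathscr E}$ to a reducing subspace, is again a shift, and $U'$ is unitary), while the constant isometry $\chi(0)$ on $\mathscr G$ is Wold-decomposed: its unitary part merges into the unitary--shift piece, and its shift part is precisely what produces the bi-shift summand. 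Your final paragraph gestures at such a splitting but locates it in the wrong summand, so as written the argument does not produce the $\mathscr E_3$ component.
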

\begin{proof} Since $(S_1, S_2)$ is a doubly commuting bi-isometry, condition (iv) of Theorem~\ref{main-thm} is satisfied. Hence, by Theorem~\ref{main-thm}, 
there exist Hilbert space ${\mathscr F},$  an inner function $\phi : \mathbb D \rar \mathscr B({\mathscr E}),$ a constant unitary operator-valued function $\psi : \mathbb D \rar \mathscr B({\mathscr F}),$ a closed subspace $\mathscr H_{uu}$ of $\mathscr H_\infty(S_1)$ and a bi-unitary $(V_{1}, V_{2})$ on $\mathscr H_{uu}$
such that $(S_1, S_2)$ admits the decomposition \eqref{main-decom}. Apply now Lemma~\ref{d-commuting-iso} to the doubly commuting bi-isometry $(M_{\phi, {\mathscr E}}, M_{z, {\mathscr E}})$ to get the desired decomposition. 
\end{proof}

The following result provides some finiteness conditions ensuring the orthogonal decomposition \eqref{main-decom}. In particular, it generalises \cite[Theorem~2.3]{BKS2013}.
\begin{corollary} \label{coro-main}
Let $(S_1, S_2)$ be a bi-isometry on $\mathscr H.$ 
Assume that any one of the following conditions holds$:$
\begin{enumerate}
\item[$\mathrm{(a)}$] $\dim \, \ker(P_\infty {S^*_2}|_{\mathscr H_\infty(S_1)}) < \infty,$
\item[$\mathrm{(b)}$] $\dim \, P_\infty \ker(S^*_2) < \infty,$
\item[$\mathrm{(c)}$] $\mathrm{card}\, \sigma({S_1}|_{\mathscr H_\infty(S_1)}) < \infty,$ where $\mathrm{card}$ denotes the cardinality. 
\end{enumerate}   
Then there exist a Hilbert space ${\mathscr F},$ an inner function $\phi : \mathbb D \rar \mathscr B({\mathscr E}),$ a constant unitary operator-valued function $\psi : \mathbb D \rar \mathscr B({\mathscr F}),$ a closed subspace $\mathscr H_{u}$ of $\mathscr H_\infty(S_1)$ and a bi-unitary $(V_{1}, V_{2})$ on $\mathscr H_{u}$
such that $(S_1, S_2)$ admits the decomposition 
\eqref{main-decom}.
\end{corollary}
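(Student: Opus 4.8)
The plan is to derive Corollary~\ref{coro-main} by showing that each of the three finiteness hypotheses (a), (b), (c) forces condition (iv) of Theorem~\ref{main-thm}, namely that $\mathscr N_x := P_\infty \bigvee\{S^{*k}_1 S_2(x) : k \Ge 1\}$ is finite-dimensional for every $x \in {\mathscr E}$. Once (iv) holds, Theorem~\ref{main-thm} immediately yields the decomposition \eqref{main-decom}, so the entire content of the corollary is this implication. Throughout I would work with the block decomposition $S_2 = \bigl[\begin{smallmatrix} A & B \\ 0 & D\end{smallmatrix}\bigr]$ on $\mathscr H = \mathscr H_\infty(S_1) \oplus \mathscr H_s(S_1)$ from the proof of Theorem~\ref{main-thm}, together with the intertwining relations \eqref{c-relations}; in particular $U = {S_1}|_{\mathscr H_\infty(S_1)}$, $A = {S_2}|_{\mathscr H_\infty(S_1)}$ composed with $P_\infty$ is the $(1,1)$-block, and $P_\infty {S_2}|_{\mathscr H_\infty(S_1)} = A$ while $B = P_\infty {S_2}|_{\mathscr H_s(S_1)}$. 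As in the proof of Lemma~\ref{main-lem}(ii), after passing to $\mathscr H \ominus \mathscr M$ I may assume $S_1$ (hence $U$) has no eigenvalues; this reduction is harmless for proving the decomposition since the eigenspace part contributes a bi-unitary summand absorbed into $\mathscr H_{uu}$.

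For hypothesis (c): $\sigma(U)$ is a finite subset of $\mathbb T$, and since $U$ is unitary with finite spectrum it is diagonalizable, so $\mathscr H_\infty(S_1)$ decomposes as a finite orthogonal sum of eigenspaces of $U$; but we have arranged that $U$ has no eigenvalues, forcing $\mathscr H_\infty(S_1) = \{0\}$, whence trivially $\mathscr N_x = \{0\}$ for all $x$ and (iv) holds. (Alternatively, without the reduction: each $\mathscr N_x$ is a $U^*$-invariant subspace contained in $\mathscr H_\infty(S_1)$ on which $U^*$ acts; finite spectrum of $U$ plus the argument in Lemma~\ref{main-lem}(ii) shows $\mathscr N_x$ must be supported on eigenspaces, and combined with the no-eigenvalue reduction it vanishes — I would present the clean version.)

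For hypothesis (b): note $P_\infty \ker(S^*_2)$ is a closed subspace, and I claim each $\mathscr N_x$ embeds into a bounded-multiplicity situation controlled by it. Actually the cleaner route is via (a): since $S_2$ is an isometry, $\ker(P_\infty {S^*_2}|_{\mathscr H_\infty(S_1)})$ relates to $\ker S^*_2$, and the key observation is that $B = P_\infty {S_2}|_{\mathscr H_s(S_1)}$ satisfies $B^*B = S^* B^* B S$ (from $UB = BS$ and $U$ isometric, as in the proof of (v)$\Rightarrow$(i)), so $B^*B$ is an $S$-Toeplitz operator; meanwhile one checks $\ker B^* \supseteq$ a cofinite subspace of $\mathscr H_\infty(S_1)$ under (a). I would argue: (b)$\Rightarrow$(a) because $\ker S^*_2 \supseteq \ker A^* \cap \ker B^*$-type containments give $\dim \ker(P_\infty S^*_2|_{\mathscr H_\infty(S_1)}) = \dim \ker A^* \Le \dim P_\infty \ker S^*_2$ after accounting for the triangular structure. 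Then under (a), $A^* = P_\infty S^*_2|_{\mathscr H_\infty(S_1)}$ has finite-dimensional kernel; since $A$ is a contraction on $\mathscr H_\infty(S_1)$ intertwined with the unitary $U$, finite-dimensional cokernel of $A$ forces $A$ to have the Wold-type structure with finite wandering data, so that $\bigvee\{U^{*k} B x\} = \mathscr N_x$ is finite-dimensional — here I use that $\mathscr N_x$ is $U^*$-invariant and that a $U^*$-invariant subspace generated by a single vector, with $U$ unitary having finite defect somewhere, is finite-dimensional when $U$ has no eigenvalues only if it is $\{0\}$; the finiteness in (a)/(b) provides exactly the finite-dimensionality of $\mathscr N_x$ directly.

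The main obstacle I anticipate is getting the bookkeeping right in the implication (b)$\Rightarrow$(a) and in translating "$\dim P_\infty \ker S^*_2 < \infty$" into a bound on $\dim \mathscr N_x$: one must be careful that $P_\infty \ker S^*_2$ need not equal $\ker(P_\infty S^*_2|_{\mathscr H_\infty(S_1)})$ as subspaces, and the triangular form of $S^*_2 = \bigl[\begin{smallmatrix} A^* & 0 \\ B^* & D^*\end{smallmatrix}\bigr]$ must be used to relate the two. Concretely, if $h \oplus 0 \in \mathscr H_\infty(S_1) \oplus \mathscr H_s(S_1)$ lies in $\ker S^*_2$ then $A^* h = 0$ and $B^* h = 0$, so $\ker A^* \cap \ker B^* = \ker S^*_2 \cap \mathscr H_\infty(S_1) \subseteq \ker S^*_2$; projecting, $\ker A^* \cap \ker B^* \subseteq P_\infty \ker S^*_2$, and since every element of $\ker A^*$ that also kills $B^*$... — the correct chain needs the observation that $\ker(P_\infty S^*_2|_{\mathscr H_\infty}) = \ker A^*$ and that $A^*$ having finite-dimensional kernel is what feeds into the $S$-Toeplitz / no-eigenvalue argument of Theorem~\ref{main-thm}(v)$\Rightarrow$(i) and Lemma~\ref{main-lem}(ii). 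Once these inclusions are pinned down, each case reduces to "a single-vector-cyclic $U^*$-invariant subspace of finite dimension, $U$ unitary without eigenvalues $\Rightarrow$ it is $\{0\}$", which is exactly the mechanism already used in Lemma~\ref{main-lem}(ii), so I would invoke that lemma rather than re-prove it.
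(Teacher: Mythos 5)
Your overall strategy --- verify condition (iv) of Theorem~\ref{main-thm} (equivalently, feed Lemma~\ref{main-lem}(ii)) under each hypothesis, after passing to $\mathscr H\ominus\mathscr M$ so that $S_1$ has no eigenvalues --- is the right one, and your treatment of case (c) is fine: a unitary with finite spectrum on a nonzero space has an eigenvalue, so after the reduction the relevant summand of $\mathscr H_\infty(S_1)$ is trivial. But cases (a) and (b) are not actually proved. For (b), your plan is to deduce (a) from (b), and you yourself flag the obstruction without resolving it: $\ker(P_\infty S_2^*|_{\mathscr H_\infty(S_1)})=\ker A^*$ bears no evident containment relation to $P_\infty\ker S_2^*$ (an element $h\oplus g$ of $\ker S_2^*$ with $g\neq 0$ contributes $h$ to the latter without $h$ lying in $\ker A^*$). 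The missing idea is to use $x\in\mathscr E=\ker S_1^*$ directly: for $k\Ge 1$ one has $S_2^*\big(S_1^{*k}S_2x\big)=S_1^{*k}S_2^*S_2x=S_1^{*k}x=0$, so each generator $P_\infty S_1^{*k}S_2x=U^{*k}Bx$ of $\mathscr N_x$ already lies in $P_\infty\ker S_2^*$; hence $\mathscr N_x\subseteq P_\infty\ker S_2^*$ is finite-dimensional by (b), with no detour through (a). This one-line computation is the entire content of case (b) and it does not appear in your proposal.

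For (a), the step ``finite-dimensional cokernel of $A$ forces $A$ to have the Wold-type structure with finite wandering data, so that $\mathscr N_x$ is finite-dimensional'' is not an argument: $\mathscr N_x$ is generated by $U^{*k}Bx$, not by any wandering subspace of $A$, and nothing you write connects the two. What is needed is the block form of $S_2^*S_2=I$, which yields $A^*A=I$ and $A^*B=0$; hence $\mathrm{ran}\,B\subseteq\ker A^*$, and since $UA=AU$ with $U$ unitary forces $\ker A^*$ to reduce $U$, the finite-dimensional space $\ker A^*$ would carry an eigenvalue of $U$ if it were nonzero. After your no-eigenvalue reduction this gives $\ker A^*=\{0\}$ and therefore $B=0$ outright (or, if you insist on routing through (iv), $\mathscr N_x\subseteq\ker A^*$ because $\ker A^*$ is $U^*$-invariant and contains $\mathrm{ran}\,B$). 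Without the relations $A^*A=I$ and $A^*B=0$, which appear nowhere in your proposal, case (a) does not go through.
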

\begin{proof}  
By Lemma~\ref{main-lem}(i), $S_2=\big[\begin{smallmatrix} A & B \\
0 & D \end{smallmatrix}\big],$ where $A, B, D$ satisfy \eqref{c-relations}. After replacing $\mathscr H$ by $\mathscr H \ominus \mathscr M,$ if required, we may assume that \eqref{no-e-v} holds. 

Assume that (a) holds.
A routine calculation using $S^*_2S_2=I$ shows that 
\beq
\label{A-iso}
A^*A =I,  \quad A^*B=0.
\eeq
By \eqref{c-relations}, $UA=AU.$ Since $U$ is unitary, $UA^*=A^*U.$ Thus, $\ker A^*$ is a reducing subspace for $U.$ If $\ker A^* \neq \{0\},$ then $U$ has an eigenvalue (since by (a), $\ker A^*$ is finite dimensional), and hence $S_1$ has eigenvalue, which is not possible in view of \eqref{no-e-v}. Thus $\ker A^* = \{0\},$ and hence by \eqref{A-iso}, $B=0.$

To see the remaining assertions, 
for $x \in \mathscr H,$ let 
\beq 
\label{Mx}
\mathscr N_x :=\bigvee \{U^{*k}P_\infty S_2(x) : k \Ge 1\}. 
\eeq
In view of Lemma~\ref{main-lem}(ii), it suffices to check that 
$\mathscr N_x$  is finite-dimensional for any $x \in {\mathscr E}$ provided either of (b) and (c) holds.

First, assume that (b) holds. 
Since $S_1S_2=S_2S_1$ and $S^*_2S_2=I,$ for any positive integer $k,$ 
\beqn
S^*_2\Big(S^{*k}_1S_2\big[\begin{smallmatrix} 0 \\
x \end{smallmatrix}\big]\Big) =S^{*k}_1\big[\begin{smallmatrix} 0 \\
x \end{smallmatrix}\big] = \big[\begin{smallmatrix} 0 \\
S^{*k}x \end{smallmatrix}\big] =  0.
\eeqn
This combined with  
\beqn
S^{*k}_1S_2\big[\begin{smallmatrix} 0 \\
x \end{smallmatrix}\big] =\big[\begin{smallmatrix} U^{*k} & 0 \\
0 & S^{*k} \end{smallmatrix}\big]\big[\begin{smallmatrix} Bx \\
Dx \end{smallmatrix}\big] = \big[\begin{smallmatrix} U^{*k}Bx \\
S^{*k}Dx \end{smallmatrix}\big]
\eeqn
shows that $\big[\begin{smallmatrix} U^{*k}Bx \\
S^{*k}Dx \end{smallmatrix}\big] \in \ker S^*_2$ for every integer $k \Ge 1.$
Thus, by \eqref{Mx}, $\mathscr N_x \subseteq P_\infty (\ker S^*_2),$ and hence
$\dim \mathscr N_x$ is finite for every $x \in {\mathscr E}.$ 

Next, assume that (c) holds. Thus there exists a polynomial $p$ vanishing on $\sigma(U),$ where $U:={S_1}|_{\mathscr H_\infty(S_1)}$ is a unitary operator.  
By the spectral mapping property, $\sigma(p(U))=\{0\}.$ Since $p(U)$ is a normal operator, $p(U)=0.$ It is now easy to see that $\dim \mathscr N_x$ is finite for every $x \in {\mathscr E}.$ 
\end{proof}

It is worth noting that the conclusion of Corollary~\ref{coro-main} extends naturally to commuting $n$-tuples $(S_1, \ldots, S_n)$ of isometries for which the dimension of $\ker \prod_{j=1}^n S^*_j$ is finite. The proof of this fact can be given along the lines of proof of \cite[Theorem 2.4]{BKS2013}. We leave the details to the reader. 

\section{Bi-isometries not reducing hyper-ranges of the coordinates}

In this short section, we discuss a class of bi-isometries $(S_1, S_2)$ on $\mathscr H$ for which $\mathscr H_\infty(S_1)$ is not a reducing subspace of $S_2.$
We begin with the following consequence of the proof of Theorem~\ref{main-thm} (see Lemma~\ref{main-lem}(i), \eqref{c-relations} and the discussion following \eqref{T1-Wold}).
\begin{proposition} \label{prop-coro-main}
If $(S_1, S_2)$ is a bi-isometry on $\mathscr H,$ then there exist a unitary operator $U \in \mathscr B(\mathscr H_\infty(S_1)),$ $\phi \in \mathscr H^{\infty}_{\mathscr B({\mathscr E})}(\mathbb D)$ with $\|\phi\|_{\infty} \Le 1,$ an isometry $A \in \mathscr B(\mathscr H_\infty(S_1))$ and $B \in \mathscr B(\mathscr H^2_{\mathscr E}(\mathbb D), \mathscr H_\infty(S_1))$ such that 
\beqn
\mbox{$UA = AU,$ $UB=BM_{z, {\mathscr E}},$ $A^*B=0,$ $B^*B+ M^*_{\phi, {\mathscr E}} M_{\phi, {\mathscr E}} =I$ and} \\
(S_1, S_2) \cong \Big(\Big[\begin{smallmatrix} U & 0 \\
0 & M_{z, {\mathscr E}} \end{smallmatrix}\Big],  \Big[\begin{smallmatrix} A & B \\
0 & M_{\phi, {\mathscr E}} \end{smallmatrix}\Big]\Big)~\mbox{on}~\mathscr H_\infty(S_1) \oplus \mathscr H^2_{\mathscr E}(\mathbb D),
\eeqn 
where $\cong$ denotes the unitary equivalence. 
\end{proposition}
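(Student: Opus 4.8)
The plan is to read the asserted block structure directly off the Wold-type decomposition of $S_1$, combined with the commutation and isometry relations already isolated in Section~2. First I would apply Theorem~\ref{Wold-type-new} to the isometry $S_1$: it yields the decomposition $S_1 = U \oplus S$ on $\mathscr H = \mathscr H_\infty(S_1) \oplus \mathscr H_s(S_1)$, where $U := S_1|_{\mathscr H_\infty(S_1)}$ is unitary and $S := S_1|_{\mathscr H_s(S_1)}$ is unitarily equivalent to the shift $M_{z, {\mathscr E}}$ on $\mathscr H^2_{\mathscr E}(\mathbb D)$ (here one uses ${\mathscr E} = \ker S^*_1 = \ker S^*$, so that the shift has the right multiplicity). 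Fix a unitary $W : \mathscr H_s(S_1) \to \mathscr H^2_{\mathscr E}(\mathbb D)$ with $W S W^* = M_{z, {\mathscr E}}$. By Lemma~\ref{main-lem}(i), $\mathscr H_\infty(S_1) = \mathscr H_u(S_1)$ is invariant for $S_2$, so $S_2 = \big[\begin{smallmatrix} A & B_0 \\ 0 & D \end{smallmatrix}\big]$ relative to $\mathscr H = \mathscr H_\infty(S_1) \oplus \mathscr H_s(S_1)$ for suitable bounded operators $A, B_0, D$.

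Next I would harvest two families of identities. Commutativity $S_1 S_2 = S_2 S_1$ gives, exactly as in \eqref{c-relations} (the lower-left block being $0$), the relations $UA = AU$, $UB_0 = B_0 S$ and $SD = DS$. Expanding $S^*_2 S_2 = I$ blockwise gives $A^*A = I$, $A^*B_0 = 0$ and $B_0^*B_0 + D^*D = I$; in particular $A$ is an isometry, and $D^*D = I - B_0^*B_0 \Le I$, so $D$ is a contraction. In contrast with the situation in the proof of Theorem~\ref{main-thm}, here $B_0$ need not vanish, so $D$ need not be an isometry, which is why we will only be able to conclude $\|\phi\|_\infty \Le 1$.

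Finally I would transport $B_0$ and $D$ to the model space. Put $B := B_0 W^* \in \mathscr B(\mathscr H^2_{\mathscr E}(\mathbb D), \mathscr H_\infty(S_1))$ and note that $W D W^*$ commutes with $M_{z, {\mathscr E}}$ because $D$ commutes with $S$; hence the description of the commutant of the shift, that is, \cite[Lemma~V.3.2]{SFBK2010}, produces $\phi \in \mathscr H^\infty_{\mathscr B({\mathscr E})}(\mathbb D)$ with $W D W^* = M_{\phi, {\mathscr E}}$, and $\|\phi\|_\infty = \|M_{\phi, {\mathscr E}}\| = \|D\| \Le 1$. Conjugating the pair $(S_1, S_2)$ by $I_{\mathscr H_\infty(S_1)} \oplus W$ then turns the transported relations into $UB = BM_{z, {\mathscr E}}$, $A^*B = 0$ and $B^*B + M^*_{\phi, {\mathscr E}} M_{\phi, {\mathscr E}} = I$, and realises $(S_1, S_2)$ as the claimed operator-matrix pair on $\mathscr H_\infty(S_1) \oplus \mathscr H^2_{\mathscr E}(\mathbb D)$. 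The only step that is more than routine block-matrix algebra is the appeal to the commutant lemma, and it is available precisely because the final clause of Theorem~\ref{Wold-type-new} identifies $S$ with the full shift $M_{z, {\mathscr E}}$.
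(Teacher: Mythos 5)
Your proof is correct and follows essentially the same route as the paper, which obtains this proposition as a byproduct of the proof of Theorem~\ref{main-thm}: the Wold decomposition of $S_1$, the upper-triangular form of $S_2$ from Lemma~\ref{main-lem}(i), the block relations \eqref{c-relations} together with $S_2^*S_2=I$, and the commutant lemma \cite[Lemma~V.3.2]{SFBK2010} to realise $D$ as $M_{\phi,\mathscr E}$. All the block computations check out, so there is nothing to add.
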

The previous proposition raises the following question$:$
\begin{question} \label{Q2}
With the notations as in Proposition~\ref{prop-coro-main}, when does there exist a nonzero $B \in \mathscr B(\mathscr H^2_{\mathscr E}(\mathbb D), \mathscr H_\infty(S_1))$ such that $UB=BM_{z, {\mathscr E}},$ $A^*B=0$ and $B^*B+M^*_{\phi, {\mathscr E}} M_{\phi, {\mathscr E}} =I$?
\end{question}

Here is an answer to Question~\ref{Q2} in case of ${\mathscr E}=\mathbb C.$
\begin{proposition} \label{answer-Q2}
Under the notations of Proposition~	\ref{prop-coro-main} with ${\mathscr E}=\mathbb C,$   
\beq \label{BP-B1}
UB=BM_{z} ~\Longleftrightarrow~B(p) = p(U)B(1)~\mbox{for every polynomial}~p.
\eeq
If $UB=BM_z,$ then 
the following statements are valid$:$
\begin{enumerate}
\item[$\mathrm{(i)}$]  $A^*B=0$ if and only if $A^*B(1)=0,$
\item[$\mathrm{(ii)}$]
if $\mu$ denotes the spectral measure of the unitary operator $U$ $($extended trivially to $\mathbb T),$ then $B^*B+M^*_\phi M_\phi =I$ if and only if 
\beqn 
%\label{assum-B-nonz}
d\inp{\mu(\cdot)B(1)}{B(1)} = (1-|\phi|^2)d\theta.
\eeqn
\end{enumerate}
\end{proposition}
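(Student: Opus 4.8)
The plan is to work entirely inside the concrete model furnished by Proposition~\ref{prop-coro-main} with ${\mathscr E}=\mathbb C$, so that $A$ and $U$ are operators on $\mathscr H_\infty(S_1)$, $B \in \mathscr B(\mathscr H^2(\mathbb D), \mathscr H_\infty(S_1))$, $\phi \in \mathscr H^\infty(\mathbb D)$ with $\|\phi\|_\infty \Le 1$, and $UB = BM_z$. The first task is \eqref{BP-B1}. The implication $(\Leftarrow)$ is trivial, since $B(p) = p(U)B(1)$ applied with $p$ replaced by $zp$ gives $B(zp) = (zp)(U)B(1) = U\,p(U)B(1) = U B(p)$, and $B M_z(p) = B(zp)$, so $UB = BM_z$ on polynomials, which are dense. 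For $(\Rightarrow)$, I would induct on $\deg p$: the relation $UB = BM_z$ reads $U B(q) = B(zq)$ for all polynomials $q$; starting from $B(1)$, we get $B(z) = UB(1)$, $B(z^2) = UB(z) = U^2 B(1)$, and in general $B(z^n) = U^n B(1)$, and then extend by linearity to all $p$. So \eqref{BP-B1} is immediate.

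Granting $UB = BM_z$, hence $B(p) = p(U)B(1)$ for all polynomials $p$, part (i) follows quickly. The forward direction $A^*B = 0 \Rightarrow A^*B(1) = 0$ is trivial. Conversely, if $A^*B(1) = 0$, then using $UA = AU$ (hence $A^*U^* = U^*A^*$, equivalently $A^*U = UA^*$ since $U$ is unitary) we get $A^*B(z^n) = A^* U^n B(1) = U^n A^* B(1) = 0$ for every $n \Ge 0$; by linearity $A^*B(p) = 0$ on polynomials, which are dense in $\mathscr H^2(\mathbb D)$, so $A^*B = 0$ by continuity. The only subtlety is to note carefully that $A^*$ commutes with $U$ (not with $U^*$ in the obvious way), which is where the hypothesis that $U$ is \emph{unitary} is used: from $UA = AU$ we get $A^*U^* = U^*A^*$, and multiplying on left and right by $U$ and using $UU^* = U^*U = I$ gives $A^*U = UA^*$.

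For part (ii), the key point is to compute $B^*B$ against a spanning family of vectors and compare with $M_\phi^* M_\phi$. Since polynomials are dense, it suffices to verify $\inp{(B^*B + M_\phi^*M_\phi)z^m}{z^n} = \inp{z^m}{z^n} = \delta_{mn}$ for all $m, n \Ge 0$. For the first summand, $\inp{B^*B z^m}{z^n} = \inp{B(z^m)}{B(z^n)} = \inp{U^m B(1)}{U^n B(1)} = \inp{U^{n*}U^m B(1)}{B(1)}$. Writing $U = \int_{\mathbb T} \zeta\, d\mu(\zeta)$ via its spectral measure (extended trivially off $\sigma(U)$ to all of $\mathbb T$, as in the statement), this equals $\int_{\mathbb T} \zeta^m \overline{\zeta^n}\, d\inp{\mu(\zeta)B(1)}{B(1)} = \int_{\mathbb T}\zeta^{m-n}\, d\nu(\zeta)$ where $\nu := \inp{\mu(\cdot)B(1)}{B(1)}$ is a finite positive measure on $\mathbb T$ — that is, the $(m-n)$th Fourier coefficient of $\nu$. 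For the second summand, since $M_\phi$ is multiplication by the boundary function $\phi \in H^\infty \subseteq L^\infty(\mathbb T, \frac{d\theta}{2\pi})$ acting on the Hardy space viewed inside $L^2(\mathbb T)$, $\inp{M_\phi^* M_\phi z^m}{z^n} = \inp{\phi z^m}{\phi z^n}_{L^2(\mathbb T)} = \int_{\mathbb T}|\phi(\zeta)|^2 \zeta^{m-n}\, \frac{d\theta}{2\pi}$, the $(m-n)$th Fourier coefficient of $|\phi|^2 \frac{d\theta}{2\pi}$. So $B^*B + M_\phi^*M_\phi = I$ holds if and only if the measure $d\nu + |\phi|^2\frac{d\theta}{2\pi}$ has the same Fourier coefficients as $\frac{d\theta}{2\pi}$ (i.e. $\widehat{\nu + |\phi|^2\frac{d\theta}{2\pi}}(k) = \delta_{k0}$ for all $k \in \mathbb Z$ — note $k$ ranges over all of $\mathbb Z$ since both $m \Ge n$ and $m < n$ occur); by uniqueness of Fourier coefficients of finite (complex) measures on $\mathbb T$, this is equivalent to $d\nu = (1 - |\phi|^2)\frac{d\theta}{2\pi}$, i.e. $d\inp{\mu(\cdot)B(1)}{B(1)} = (1-|\phi|^2)\,d\theta$ in the normalization of the statement. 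I expect the main obstacle to be purely bookkeeping: being careful that $M_\phi^* M_\phi$ (a Toeplitz-type operator on $H^2$) is correctly identified with the $L^2(\mathbb T)$ inner product against $|\phi|^2$ (valid because $\phi \in H^\infty$, so $M_\phi$ extends the bilateral multiplication and $\inp{\phi f}{\phi g}_{H^2} = \inp{|\phi|^2 f}{g}_{L^2(\mathbb T)}$ for $f,g \in H^2$), and that the normalizations of $d\theta$ versus $\frac{d\theta}{2\pi}$ match the form written in the statement; no genuine analytic difficulty arises.
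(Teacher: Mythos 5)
Your proposal is correct and follows essentially the same route as the paper: derive $B(p)=p(U)B(1)$ from density of polynomials, use $A^*U=UA^*$ (via unitarity of $U$) for (i), and for (ii) match the moments of $d\langle\mu(\cdot)B(1),B(1)\rangle+|\phi|^2\,d\theta$ against those of $d\theta$ and invoke uniqueness for the trigonometric moment problem. The only differences are cosmetic (testing against monomials rather than general polynomials, and making explicit the commutation step that the paper leaves implicit).
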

\begin{proof} 
Since the polynomials are dense in $\mathscr H^2(\mathbb D),$ 
\eqref{BP-B1} is immediate. 
To see (i) and (ii), assume that $UB=BM_z.$

(i) Assume that $A^*B(1)=0.$ By \eqref{BP-B1}, $A^*B(p)=A^*p(U)B(1)$ for every polynomial $p.$ However, since $AU=UA,$ by  
$A^*B(p)=p(U)A^*B(1)=0.$ Since polynomials are dense in $\mathscr H^2(\mathbb D),$ $A^*B=0.$ 
The converse is trivial. 

(ii) Let 
$p, q$ be polynomials in $z.$ By \eqref{BP-B1},
\beqn
&& \inp{B(p)}{B(q)} + \inp{M_\phi(p)}{M_\phi(q)} \\
&=& \inp{p(U)B(1)}{q(U)B(1)} + \inp{\phi p}{\phi q} \\
&=& \int_{\mathbb T} p(e^{i \theta})\overline{q(e^{i \theta})} d\inp{\mu(e^{i \theta})B(1)}{B(1)} 
+ \int_{\mathbb T}p(e^{i \theta})\overline{q(e^{i \theta})}|\phi(e^{i \theta})|^2d \theta.
\eeqn
Thus $\inp{B(p)}{B(q)} + \inp{M_\phi(p)}{M_\phi(q)}= \inp{p}{q}$ if and only if 
\beqn
\int_{\mathbb T} p(e^{i \theta})\overline{q(e^{i \theta})} d\inp{\mu(e^{i \theta})B(1)}{B(1)} = \int_{\mathbb T}
p(e^{i \theta})\overline{q(e^{i \theta})}(1-|\phi(e^{i \theta})|^2)d \theta.
\eeqn 
The desired conclusion in (ii) now follows from the uniqueness of the trigonometric moment problem (see 
\cite[Theorem~1.4]{ST1943}). 
\end{proof}
 
\begin{corollary}
Under the notations of Proposition~	\ref{prop-coro-main} with ${\mathscr E}=\mathbb C,$  if the arc-length measure of $\sigma(U)$ is $0,$ then $\mathscr H_\infty(S_1)$ reduces $(S_1, S_2)$ to a doubly commuting bi-isometry on $\mathscr H_\infty(S_1),$ 
\end{corollary}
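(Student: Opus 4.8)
The plan is to invoke Proposition~\ref{answer-Q2}(ii) to pin down the off-diagonal block $B$ and then appeal to Theorem~\ref{main-thm}. Concretely, starting from the model in Proposition~\ref{prop-coro-main} with ${\mathscr E}=\mathbb C$, we have the relations $UB=BM_z$, $A^*B=0$ and $B^*B+M_\phi^*M_\phi=I$. Since $UB=BM_z$, part (ii) of Proposition~\ref{answer-Q2} applies and gives $d\langle \mu(\cdot)B(1),B(1)\rangle=(1-|\phi|^2)\,d\theta$, where $\mu$ is the spectral measure of $U$ (extended trivially to $\mathbb T$). The key observation is that the positive measure $\nu:=\langle\mu(\cdot)B(1),B(1)\rangle$ is supported on $\sigma(U)\subseteq\mathbb T$, hence is carried by a set of arc-length measure zero by hypothesis; therefore $\nu$ is singular with respect to $d\theta$. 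On the other hand, the identity $d\nu=(1-|\phi|^2)\,d\theta$ exhibits $\nu$ as absolutely continuous with respect to $d\theta$. A measure that is both singular and absolutely continuous with respect to $d\theta$ must be the zero measure, so $\nu=0$, which forces $B(1)=0$.

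Once $B(1)=0$, the characterisation $UB=BM_z\Longleftrightarrow B(p)=p(U)B(1)$ from \eqref{BP-B1} gives $B(p)=p(U)\cdot 0=0$ for every polynomial $p$; since the polynomials are dense in $\mathscr H^2(\mathbb D)$ and $B$ is bounded, $B=0$. Thus in the model of Proposition~\ref{prop-coro-main} the operator $S_2$ is block diagonal, i.e.\ $\mathscr H_\infty(S_1)$ reduces $S_2$. This is precisely statement (i) of Theorem~\ref{main-thm} (equivalently, statement (iii) holds since $S_2({\mathscr E})\subseteq \mathscr H_s(S_1)=\mathscr H_\infty(S_1)^\perp$ once $B=0$). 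Applying Theorem~\ref{main-thm}, statement (ii), we conclude that $\mathscr H_\infty(S_1)$ reduces $(S_1,S_2)$ to a doubly commuting bi-isometry on $\mathscr H_\infty(S_1)$, which is the desired conclusion.

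I do not expect any real obstacle here: every ingredient is already in place, and the argument is essentially the elementary Lebesgue-decomposition remark that a measure cannot be simultaneously singular and absolutely continuous with respect to $d\theta$ unless it vanishes. The only point deserving a line of care is the justification that $\nu$ is genuinely singular: one must note that $\nu(\mathbb T\setminus\sigma(U))=0$ because $\mu$ is supported on $\sigma(U)$, and that $\sigma(U)$ has Lebesgue (arc-length) measure zero by assumption, so $\nu$ is concentrated on a Lebesgue-null set. Everything else—passing from $B(1)=0$ to $B=0$ via density of polynomials, and feeding $B=0$ into Theorem~\ref{main-thm}—is routine.
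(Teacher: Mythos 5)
Your proof is correct and follows essentially the same route as the paper: both arguments apply Proposition~\ref{answer-Q2}(ii) and observe that the measure $\langle\mu(\cdot)B(1),B(1)\rangle$, being carried by the Lebesgue-null set $\sigma(U)$, cannot equal the absolutely continuous measure $(1-|\phi|^2)\,d\theta$ unless both vanish. The only cosmetic difference is that the paper first concludes $\phi$ is inner and gets $B=0$ from $B^*B=I-M_\phi^*M_\phi=0$, whereas you get $B(1)=0$ and then $B=0$ via \eqref{BP-B1}; both are immediate from the same identity.
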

\begin{proof}
By Proposition~\ref{prop-coro-main}, 
there exists $\phi \in \mathscr H^{\infty}(\mathbb D)$ with $\|\phi\|_{\infty} \Le 1$ such that $B^*B+ M^*_{\phi} M_{\phi} =I.$ It now follows from Proposition~\ref{answer-Q2}(ii) and the assumption that the arc-length measure of $\sigma(U)$ is $0$ that $\phi$ is an inner function. Thus $M_\phi$ is an isometry or, equivalently, $B=0.$ One may now apply Theorem~\ref{main-thm}. 
\end{proof}
It would be interesting to know if the conclusion of the previous corollary holds in case ${\mathscr E}$ is of a dimension bigger than $1.$
%\medskip \textit{Acknowledgment}. \

{}

\end{document}